\newtheorem{thm}{Theorem}
\newtheorem{lem}[thm]{Lemma}
\newtheorem{cor}[thm]{Corollary}
\theoremstyle{definition}
\newtheorem{df}{Definition}
\newtheorem{rem}{Remark}
\newtheorem{ex}{Example}
\def\<{\langle}
\def\>{\rangle}
\def \w {\omega}
\def \b {\beta}
\def \s {\sigma}
\def \e {\varepsilon}
\begin{document}
%

\title{A Convex-Geometric Approach to Ensemble Control Analysis and Design in a Hilbert Space}

%
%

\author{Wei~Miao,~\IEEEmembership{Student Member,~IEEE,}
        and~Jr-Shin~Li,~\IEEEmembership{Senior~Member,~IEEE}
\thanks{*This work was supported in part by the National Science Foundation under the awards ECCS-1810202, CMMI-1737818, and CMMI-1763070, and by the Air Force Office of Scientific Research under the award FA9550-17-1-0166.}%
\thanks{W. Miao is with the Department of Electrical and Systems Engineering, Washington University, St. Louis, MO 63130, USA
        {\tt\small weimiao@wustl.edu}}%
\thanks{J.-S. Li is with the Department of Electrical and Systems Engineering, Washington University,
        St. Louis, MO 63130, USA
        {\tt\small jsli@wustl.edu}}
}

\maketitle

\begin{abstract}
    In this paper, we tackle the long-standing challenges of ensemble control analysis and design using a convex-geometric approach in a Hilbert space setting. Specifically, we formulate the control of linear ensemble systems as a convex feasibility problem in a Hilbert space, which can be solved by iterative weighted projections. Such a non-trivial geometric interpretation not only enables a systematic design principle for constructing feasible, optimal, and constrained ensemble control signals, but also makes it possible for numerical examination of ensemble reachability and controllability. Furthermore, we incorporate this geometric approach into an iterative framework and illustrate its capability to derive feasible controls for steering bilinear ensemble systems. We conduct various numerical experiments on the control of linear and bilinear ensembles to validate the theoretical developments and demonstrate the applicability of the proposed convex-geometric approach.
\end{abstract}

\begin{IEEEkeywords}
    Ensemble control, Convex feasibility problem, Iterative weighted projection, Iterative method.
\end{IEEEkeywords}

%
\IEEEpeerreviewmaketitle

\section{Introduction}

Controlling the collective behavior of a population of structurally identical systems, called ensemble control, has received increasing attention in the past decade in the fields of mathematical and applied control. The research streams are driven by the need of investigating deep and unexplored fundamental properties and new control design principles of ensemble control systems, as well as by the relevance of such systems to broad and emerging applications in diverse scientific domains. 

The unique characteristic of the ensemble control problem is in its underactuated nature. Namely, the control and observation of such systems can only be made at the population level, through broadcasting a single input signal to the entire ensemble \cite{glaser1998unitary,li2006control} or receiving aggregated (snapshot-type of) measurements \cite{zeng2014tac}. This nonstandard scenario arises mainly because the number of systems in the ensemble can be exceedingly large so that control via state feedback of each system is infeasible. Notable examples include exciting an ensemble of nuclear spins on the order of Avogadro's number in nuclear magnetic resonance (NMR) spectroscopy and imaging \cite{glaser1998unitary,li2011optimal}, manipulating a group of robots under model perturbation \cite{becker2012approximate}, creating synchronization patterns in a network of coupled oscillators \cite{rosenblum2004controlling,Li_NatureComm16}, and spiking population of neurons to alleviate brain disorders such as Parkinson's disease \cite{brown2004phase,li2013control,kafashan2015optimal}. 

Extensive studies have been conducted on analyzing fundamental properties of ensemble control systems, such as ensemble controllability and ensemble observability \cite{li2009ensemble,zeng2016moment,zeng2017sampled,chen2017distinguished,chen2019structure,schonlein2016controllability,helmke2014uniform}. The developed theoretical methods made good use of algebraic structures of the studied ensemble systems to quantify their reachable sets or observable space defined by parameter-dependent vector fields so as to understand controllability or observability. These methods, however, are in general not suitable for direct implementation as control design principles, so that, independent of control-theoretic analysis, customized computational algorithms are often the ultimate solutions to tackling the challenging ensemble control design problems. In particular, a wide range of numerical methods based on the principle of ensemble control and optimal control theory, such as pseudospectral methods \cite{li2011optimal,ruths2012optimal,phelps2014consistent}, operator-theoretic methods \cite{Li_IFAC17,zeng2018computation}, sample average approximation \cite{phelps2016optimal}, and polynomial approximations \cite{tie2017explicit}, have been devised. Instead of giving an explicit form of a feasible or an optimal ensemble control signal, most of these methods discretize the parameter and the control space, and then solve the resulting non-convex optimization problem defined in a high-dimensional Euclidean space, which may suffer from the curse of dimension and the issue of trapping into local optimality. Besides, specialized techniques dedicated to particular classes of ensemble control problems, such as singular value decomposition (SVD) based algorithm for minimum-energy control of linear ensembles \cite{zlotnik2012synthesis} and iterative methods for quadratic optimal control of bilinear ensembles \cite{Li_SICON17,Li_Automatica18}, do not require solving large-scale optimization problems but have a limited scope, e.g., not able to incorporate constraints on the state and control functions.

In this paper, we develop a unified convex-geometric approach to analyze fundamental properties and synthesize feasible and optimal ensemble controls for linear ensemble systems. Our main idea is to cast the problem of ensemble control design as a ``convex feasibility problem'' in a Hilbert space by leveraging the linearity and convexity inherited in such systems. This is equivalent to finding a feasible point in the intersection of a collection of convex sets defined by the admissible control sets of the individual systems in the ensemble. This nontrivial interpretation of the ensemble control design enables the use of iterative projections in Hilbert spaces to construct feasible and optimal ensemble controls and their convergence properties for numerical evaluation of ensemble reachability. We further extend this design strategy to incorporate constraints on the control inputs, e.g., power or energy limitations, and to find feasible controls for bilinear ensemble systems.

The paper is organized as follows. In Section \ref{sec:feasible control}, we introduce the convex-geometric interpretation of ensemble control and illustrate the use of this interpretation to develop a systematic approach to controllability analysis and feasible control design for linear ensemble systems based on the ideas of iterative weighted projections in Hilbert spaces. In Section \ref{sec: optimal control}, we include an optimization formulation into this framework for the design of minimum-energy ensemble controls. In Section \ref{sec: ensemble control with constraints}, we tailor the proposed convex-geometric approach to accommodating constraints on control inputs and design feasible and optimal ensemble controls with bounds on the total available power or energy. In Section \ref{sec: bilinear}, we extend the convex-geometric approach to design feasible controls of bilinear ensemble systems. In Section \ref{sec: Computation of iterative projection}, we address important aspects of numerical computations for iterative weighted projections in Hilbert spaces, which are important to the synthesis of accurate ensemble controls. Finally, in Section \ref{sec: numerical experiments}, we illustrate the applicability of the proposed convex-geometric approach by a sequence of numerical experiments, including the design of ensemble controls for pattern formation in a linear ensemble and broadband quantum pulses. We also use the pattern formation example to demonstrate the numerical verification of ensemble reachability.

$\frac{}{}         x^{}\dots $

\section{A Convex-Geometric Interpretation of Ensemble Control}
\label{sec:feasible control}
In this section, we present the idea of casting the ensemble control design and controllability analysis to a `convex feasibility problem'. Leveraging on this novel interpretation, we develop methods based on the techniques of iterative weighted projections to systematically construct feasible and, further, optimal controls for linear ensemble systems. In addition, the developed methods offer a rigorous numerical evaluation for reachability between ensemble states of interest, which has not been explored in the literature.

\subsection{The Feasibility Problem and Ensemble Control Design}
\label{sec:linear}
Consider the time-varying linear ensemble system defined in a Hilbert space,
\begin{equation}
  \label{eq:linear}
  \frac{d}{dt}X(t,\b)=A(t,\b)X(t,\b)+B(t,\b)u(t),
\end{equation}
indexed by the parameter $\b$ taking values on a compact set $K\subset\mathbb{R}$, where $X(t,\cdot)\in L^2(K,\mathbb{R}^n)$ is the state, an $n$-tuple of $L^2$-functions over $K$ for each $t\in [0,T]$ with $T\in (0,\infty)$; $u\in L^2([0,T],\mathbb{R}^m)$ is the control, an $L^2$-function; $A\in L^\infty(D,\mathbb{R}^{n\times n})$ and $B\in L^2(D,\mathbb{R}^{n\times m})$ are matrices whose elements are real-valued $L^\infty$- and $L^2$-functions, respectively, defined on the compact set $D=[0,T]\times K$. A typical goal for the control of such an ensemble system is to design a `broadcast' control signal $u$ that steers the entire ensemble from an initial state $X_0(\beta)$ to, or to be within a desired neighborhood of, a target state $X_F(\b)$ at a finite time $T$. This is related to the properties of ensemble reachability and ensemble controllability formally defined as follows.

\begin{df}[Ensemble Reachability and Controllability]
  \label{def:controllability}
  Consider an ensemble of systems defined on a manifold $M$ parameterized by a parameter $\b$ taking values on a space $K$, given by 
  \begin{equation}
    \label{eq:ensemble}
    \frac{d}{dt}X(t,\b)=F(t,\b,X(t,\b),u(t)),
  \end{equation}
  where $X(t,\cdot)\in\mathcal{F}(K)$ is the state and $\mathcal{F}(K)$ is a space of $M$-valued functions defined on $K$. A target state $X_F(\beta)\in\mathcal{F}(K)$ is said to be \textit{ensemble reachable} from an initial state $X(0, \beta)\in\mathcal{F}(K)$ if for any $\e>0$, there exists a piecewise constant control signal $u:[0,T]\to\mathbb{R}^m$ that steers the system into an $\e$-neighborhood of a desired target state $x_F\in\mathcal{F}(K)$ at a finite time $T>0$, i.e., $\rho(x(T,\cdot),X_F(\cdot))<\e$, where $\rho:\mathcal{F}(K)\times \mathcal{F}(K)\rightarrow\mathbb{R}$ is a metric on $\mathcal{F}(K)$. Furthermore, if any $X_F(\beta)\in \mathcal{F}(K)$ is ensemble reachable from arbitrary $X(0, \beta)\in\mathcal{F}(K)$, then the system is said to be \textit{ensemble controllable} on $\mathcal{F}(K)$.
\end{df}


Due to the nonstandard, under-actuated nature of ensemble systems, it is opaque to realize how to assemble the right toolkit from classical systems theory for ensemble control-theoretic analysis and control design. It is to our surprise that such a challenging task becomes transparent from a delicate convex-geometric perspective. To fix ideas, let's now consider a finite sample of sub-systems, $X(t,\b_i)$, $i=1,\ldots,N$, from the ensemble in \eqref{eq:linear} with the parameter $\b_i$ taking values in $K$. In this way, each sampled sub-system is a finite-dimensional time-varying linear system in $\mathbb{R}^n$, following the dynamics   
\begin{equation}
  \label{eq:linear_sample}
  \frac{d}{dt}X(t,\b_i)=A(t,\b_i)X(t,\b_i)+B(t,\b_i)u(t),
\end{equation}
where $X(t,\b_i)\in\mathbb{R}^n$ for each $\b_i$ and for all $t\in [0,T]$. From linear systems theory \cite{brockett2015finite}, the control input $u(t)$ driving the system in \eqref{eq:linear_sample}, with a given $\b_i$, from an initial state $X(0,\b_i) = X_0(\b_i)\in\mathbb{R}^n$ to a target state $X_F(\b_i)\in\mathbb{R}^n$ at time $T$ satisfies the integral equation  
\begin{equation*}
  L_i u = \xi_i,
\end{equation*}
where $L_i: L^2([0, T], \mathbb{R}^m) \to \mathbb{R}^n$ is defined by
\begin{equation}
  \label{eq:Li}
  L_i(u) = \int_0^T \Phi(T,\s,\b_i)B(\s,\b_i)u(\s)d\s,
\end{equation} 
$\xi_i \in \mathbb{R}^n$ is formed by the initial and target states,
\begin{equation}\label{eq:xi_i}
  \xi_{i}=X_F(\b_i)-\Phi(T,0,\b_i)X_0(\b_i),
\end{equation}
and $\Phi$ is the transition matrix associated with the system in \eqref{eq:linear_sample}.

We first observe that the linearity of the operator $L_i$ in \eqref{eq:Li}, with respect to the control $u$, gives convexity of the admissible control set.

\begin{lem}
  \label{lem:convex}
  The admissible control set of the system in \eqref{eq:linear_sample} associated with a given pair of initial and target states $(X_0(\b_i),X_F(\b_i))$, defined by 
  \begin{equation}
    \label{eq:Ci}
    C_i = \big\{u\in L^2([0, T], \mathbb{R}^m) \:|\: L_i u = \xi_i \big\},
  \end{equation}
  is a convex and closed affine subspace, where $L_i$ and $\xi_i$ are defined as in \eqref{eq:Li} and in \eqref{eq:xi_i}, respectively.

  \begin{proof}
    For any two controls $u_1,u_2\in C_i$ and any constant $\lambda\in[0,1]$, it holds that
    \begin{align*}
      L_i(\lambda u_1 + (1-\lambda)u_2) &= \lambda L_iu_1 + (1-\lambda)L_iu_2 \\
      &= \lambda\xi_i + (1-\lambda)\xi_i = \xi_i,
    \end{align*}
    and hence $C_i$ is convex. Because $L_i$ is continuous, $C_i$ is closed since it is the inverse image of $\{\xi_i\}$, which is a closed set in $\mathbb{R}^n$. In addition, since $L_i(u_1-u_2) = 0$ holds for any $u_1, u_2\in C_i$, $C_i$ is an affine subspace by linearity of $L_i$. 
  \end{proof}
\end{lem}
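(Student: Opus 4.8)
The plan is to derive all three properties — convexity, closedness, and the affine structure — from the single structural fact that $C_i$ is the preimage $L_i^{-1}(\{\xi_i\})$ of one point under a \emph{bounded linear} operator $L_i \colon L^2([0,T],\mathbb{R}^m)\to\mathbb{R}^n$. The only genuine preliminary work is to confirm that $L_i$ is indeed linear and bounded under the regularity hypotheses on $A$ and $B$; once that is in place, each claimed property is a standard consequence.

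First I would record linearity of $L_i$ directly from \eqref{eq:Li}: the integrand $\Phi(T,\s,\b_i)B(\s,\b_i)u(\s)$ depends linearly on $u$, so $L_i(au_1+bu_2)=aL_iu_1+bL_iu_2$ by linearity of the integral. Convexity of $C_i$ is then immediate: for $u_1,u_2\in C_i$ and $\l\in[0,1]$ one gets $L_i(\l u_1+(1-\l)u_2)=\l\xi_i+(1-\l)\xi_i=\xi_i$. For closedness I would establish boundedness of $L_i$ via Cauchy--Schwarz: since $A\in L^\infty$ forces the transition matrix $\Phi(T,\cdot,\b_i)$ to be bounded on $[0,T]$ and $B(\cdot,\b_i)\in L^2$, the matrix kernel $\Phi(T,\s,\b_i)B(\s,\b_i)$ lies in $L^2([0,T])$, whence $\|L_i u\|\le\|\Phi(T,\cdot,\b_i)B(\cdot,\b_i)\|_{L^2}\,\|u\|_{L^2}$. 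Continuity of $L_i$ together with closedness of the singleton $\{\xi_i\}$ in $\mathbb{R}^n$ then gives that $C_i=L_i^{-1}(\{\xi_i\})$ is closed. Finally, for the affine-subspace claim I would fix any reference control $u_0\in C_i$ and show $C_i=u_0+\ker L_i$: a control $u$ lies in $C_i$ exactly when $L_i(u-u_0)=\xi_i-\xi_i=0$, i.e. when $u-u_0\in\ker L_i$, and since $\ker L_i$ is a linear subspace, $C_i$ is one of its translates.

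I do not anticipate a serious obstacle here; the lemma is essentially bookkeeping once boundedness of $L_i$ is secured. The one point meriting care is verifying the $L^2$-integrability of the kernel $\Phi(T,\cdot,\b_i)B(\cdot,\b_i)$ so that the Cauchy--Schwarz estimate is legitimate — this is precisely where the $L^\infty$ assumption on $A$ (guaranteeing a bounded $\Phi$) and the $L^2$ assumption on $B$ enter, and it is worth stating explicitly rather than treating $L_i$ as continuous by fiat.
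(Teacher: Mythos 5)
Your proposal is correct and follows essentially the same route as the paper's proof: convexity by direct computation with the linearity of $L_i$, closedness as the preimage $L_i^{-1}(\{\xi_i\})$ of a closed singleton under a continuous map, and the affine structure as a translate of $\ker L_i$. The one difference is that you explicitly justify the boundedness of $L_i$ via Cauchy--Schwarz using $A\in L^\infty$ and $B\in L^2$, a detail the paper asserts without proof, which is a worthwhile addition but not a different approach.
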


This result, though straightforward to show, is very inspiring for a new interpretation of ensemble control design. That is, if we consider two systems in \eqref{eq:linear_sample} characterized by $\b_i$ and $\b_j$ for $i\neq j$, then a common control law that simultaneously steers the two systems to achieve the respective desired transfers must lie in the intersection of the convex admissible control sets $C_i$ and $C_j$, which is also a convex set. In this case, we have $u\in C_i\cap C_j$ such that $L_i u = \xi_i$ and $L_j u=\xi_j$. The same logic applies to an arbitrary number of systems, as illustrated in Figure \ref{fig: geo1}. Therefore, the design of a broadcast ensemble control input is equivalent to a `convex feasibility problem' over a Hilbert space, namely, a problem of finding a point in the intersection of convex sets. This can be formulated as an optimization problem of the form,
\begin{equation}
  \label{equ: convex feasibility problem}
    \begin{array}{cc}
    \text{min} & 0,\\
    \text{s.t.} & u \in \displaystyle\bigcap_{i=1}^N C_i,
    \end{array}
\end{equation}
where $C_i$ are defined in \eqref{eq:Ci} for $i=1,\ldots,N$.

\begin{figure}[h]
  \centering
  \adjustbox{minipage=1em,valign=t}{\subcaption{}\label{fig: geo1}}%
  \begin{subfigure}[b]{0.4\linewidth}
    \centering\includegraphics[width=0.6\linewidth]{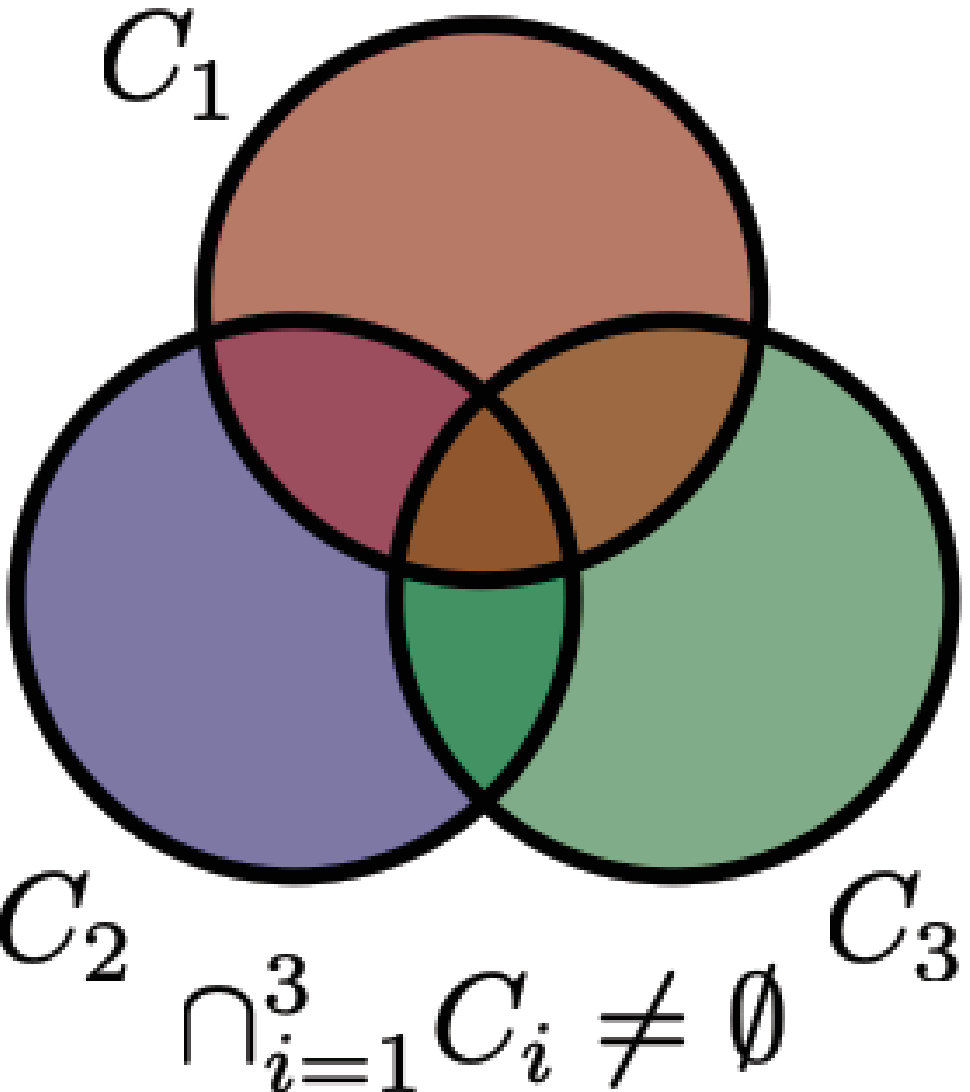}
  \end{subfigure}
  \hspace{5pt}
  \adjustbox{minipage=1em,valign=t}{\subcaption{}\label{fig: geo2}}%
  \begin{subfigure}[b]{0.4\linewidth}
    \centering\includegraphics[width=0.6\linewidth]{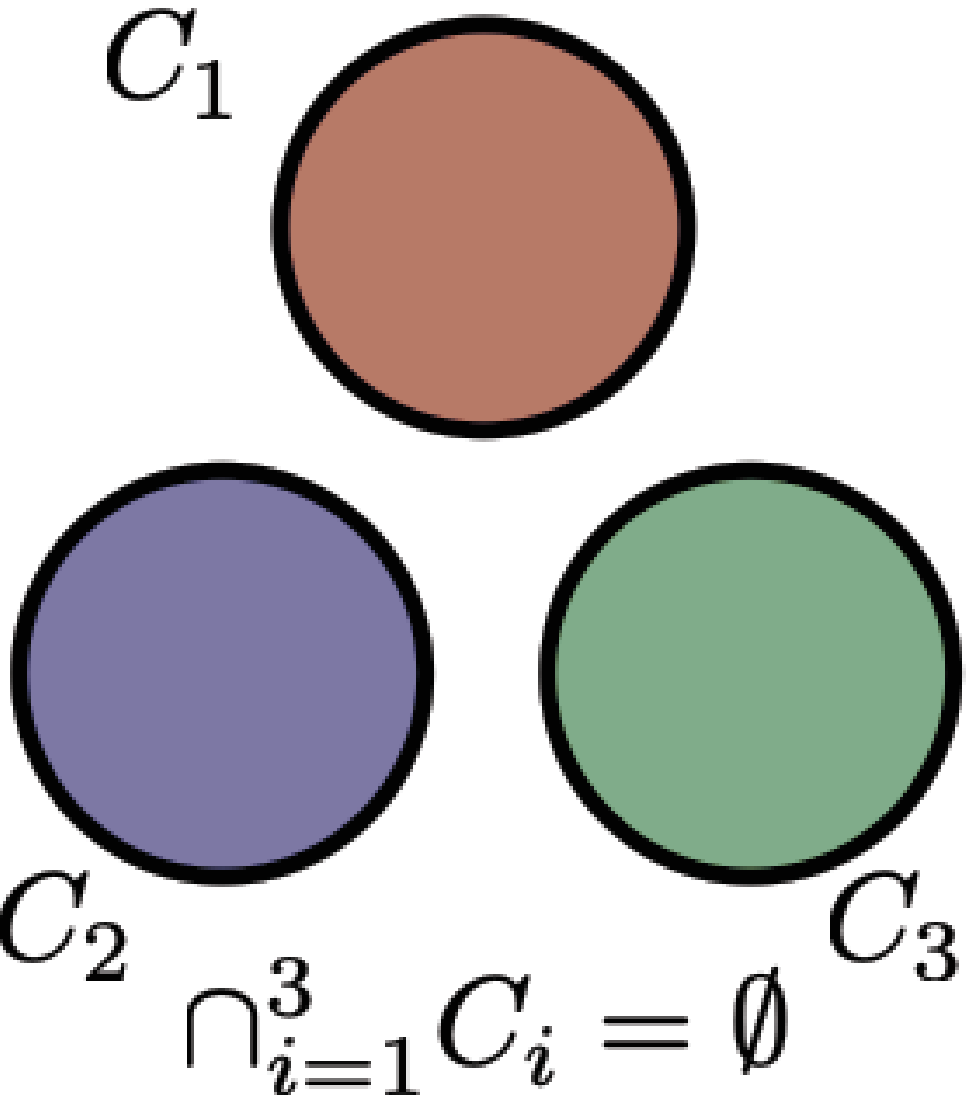} 
  \end{subfigure}
  \caption{Illustration on formulating ensemble control problem as a feasibility problem. $C_i, i = 1,2,3$ denotes the set of control law that steers the sub-system $\beta_i$ to the target state. (a) If the intersection of $3$ admissible control sets is non-empty, then $\cap_{i=1}^3 C_i$ is the collection of feasible ensemble control laws. (b) If the intersection of $3$ admissible control sets is empty, then the system is not ensemble controllable since there exists no control law that steers all sub-systems to target state simultaneously.}
\end{figure}

\subsection{Finding Feasible Points by Iterative Weighted Projections}
\label{sec:iterative_projection}
Because all of the admissible control sets, $C_i$, $i=1,\ldots,N$, are closed and convex, and so is their intersection, a systematic and powerful approach to solve the feasibility problem in \eqref{equ: convex feasibility problem} is to utilize the techniques of iterative weighted projections, as presented in Theorem \ref{thm : weighted projection algorithm}.

\begin{thm}[Iterative weighted projections]
    \label{thm : weighted projection algorithm}
    Let $C_1, \ldots , C_N$ be a collection of closed and convex subsets in a Hilbert space $\mathcal{U}$ and let $P_{C_i}$ be the projection operator onto $C_i$ for $i=1,\ldots,N$. Consider the sequence $\{ u^{(k)} \}$ generated by the convex combination of projections,
    \begin{equation}
        \label{equ: weighted projection}
        u^{(k+1)} = \sum_{i=1}^N \lambda_iP_{C_i} u^{(k)}, \quad k=0,1,2,\ldots,
    \end{equation}
    with an initial point $u^{(0)}\in \mathcal{U}$, where $\lambda_1, \ldots , \lambda_N \in (0,1)$ and $\sum_{i=1}^N \lambda_i = 1$. If $\bigcap_{i=1}^N C_i\neq \emptyset$, then $\{u^{(k)}\}$ converges to a point $u^*\in \bigcap_{i=1}^N C_i$ weakly. Specifically, if $C_i$'s are closed affine subspaces of $\mathcal{U}$, then $\{u^{(k)}\}$ converges in norm.
\end{thm}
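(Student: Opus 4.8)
The plan is to read the iteration \eqref{equ: weighted projection} as the Picard iteration $u^{(k+1)}=\mathcal{T}u^{(k)}$ of the single averaged operator $\mathcal{T}:=\sum_{i=1}^N\lambda_iP_{C_i}$ on $\mathcal{U}$, and to reduce everything to two structural facts about $\mathcal{T}$: that it is firmly nonexpansive, and that its fixed-point set coincides with $\bigcap_{i=1}^N C_i$. First I would record that each nearest-point projection $P_{C_i}$ onto a closed convex set is firmly nonexpansive, and that any convex combination of firmly nonexpansive operators is again firmly nonexpansive, so $\mathcal{T}$ inherits this property.

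For both the fixed-point characterization and the convergence analysis I would derive one key inequality. Fix any $y\in\bigcap_{i=1}^N C_i$ (nonempty by hypothesis), so that $P_{C_i}y=y$ for every $i$. Using convexity of $\|\cdot\|^2$ (Jensen) followed by the projection identity $\|P_{C_i}u-y\|^2+\|u-P_{C_i}u\|^2\le\|u-y\|^2$, I would obtain
\[
\|\mathcal{T}u-y\|^2\le\|u-y\|^2-\sum_{i=1}^N\lambda_i\|u-P_{C_i}u\|^2 .
\]
Applying this at $u=u^{(k)}$ shows the sequence is Fej\'er monotone with respect to $\bigcap C_i$, hence bounded with $\|u^{(k)}-y\|$ convergent; telescoping the residual terms gives $\sum_k\sum_i\lambda_i\|u^{(k)}-P_{C_i}u^{(k)}\|^2<\infty$, so $\|u^{(k)}-P_{C_i}u^{(k)}\|\to0$ for each $i$ and, in particular, $\|u^{(k)}-\mathcal{T}u^{(k)}\|\to0$. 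Taking $u$ to be a fixed point in the same inequality, and using $\lambda_i>0$, forces $P_{C_i}u=u$ for all $i$, which yields $\mathrm{Fix}(\mathcal{T})=\bigcap_{i=1}^N C_i$.

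To conclude weak convergence I would invoke the demiclosedness principle: since $\mathcal{T}$ is nonexpansive and the residual $\|u^{(k)}-\mathcal{T}u^{(k)}\|\to0$, every weak cluster point of the bounded sequence $\{u^{(k)}\}$ lies in $\mathrm{Fix}(\mathcal{T})=\bigcap C_i$. Combining this with Fej\'er monotonicity via Opial's lemma (a Fej\'er-monotone sequence all of whose weak cluster points lie in the target set converges weakly to a point of that set) gives $u^{(k)}\rightharpoonup u^*\in\bigcap C_i$. I expect this step to be the main obstacle, since it is the genuinely infinite-dimensional part where norm compactness fails and one must argue through the weak topology and demiclosedness rather than through a contraction/Banach fixed-point shortcut.

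Finally, for the affine case I would exploit linearity. After translating by a fixed point $y_0\in\bigcap C_i$ so that each $C_i$ becomes a closed linear subspace, $P_{C_i}$ becomes the orthogonal projection $Q_i$, which is self-adjoint with $0\preceq Q_i\preceq I$; hence the linear part $M:=\sum_i\lambda_iQ_i$ of $\mathcal{T}$ is self-adjoint with $0\preceq M\preceq I$, and the iteration reduces to $M^k$ applied to the shifted initial point. By the spectral theorem, $M=\int_{[0,1]}t\,dE_t$ and $\|M^kx-E(\{1\})x\|^2=\int_{[0,1]}|t^k-\mathbf{1}_{\{1\}}(t)|^2\,d\langle E_tx,x\rangle\to0$ by dominated convergence, since $t^k\to\mathbf{1}_{\{1\}}(t)$ pointwise and boundedly on $[0,1]$. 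As $E(\{1\})$ projects onto $\ker(I-M)=\bigcap_i C_i$ (the eigenvalue $1$ being selected exactly because $\lambda_i>0$), this upgrades the convergence to the norm topology and completes the proof.
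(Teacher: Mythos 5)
Your proposal is correct, but it takes a genuinely different route from the paper. The paper proves this theorem by a product-space reformulation: it lifts the problem to $\mathit{\Omega}=\mathcal{U}\times\cdots\times\mathcal{U}$ with the weighted inner product $\langle U,V\rangle_{\mathit{\Omega}}=\sum_{i=1}^N\lambda_i\langle u_i,v_i\rangle_{\mathcal{U}}$, shows (in its appendix) that the weighted-projection iteration in $\mathcal{U}$ is exactly the two-set alternating projection $U^{(k+1)}=P_{\mathcal{D}}P_{\mathcal{C}}U^{(k)}$ between the product set $\mathcal{C}=C_1\times\cdots\times C_N$ and the diagonal $\mathcal{D}$, and then invokes the classical von Neumann alternating projection theorem (norm convergence for closed affine subspaces) together with its convex-set extension (weak convergence) to conclude. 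You instead work directly in $\mathcal{U}$ with the averaged operator $\mathcal{T}=\sum_{i=1}^N\lambda_iP_{C_i}$: Fej\'er monotonicity with summable residuals, the demiclosedness principle plus Opial's lemma for weak convergence, and the spectral theorem applied to the self-adjoint linear part $M=\sum_{i=1}^N\lambda_iQ_i$ (after translating by $y_0\in\bigcap_i C_i$) for norm convergence in the affine case; each of these steps is sound, including the identification $\ker(I_d-M)=\bigcap_i(C_i-y_0)$, which indeed hinges on $\lambda_i>0$. Comparing the two: the paper's route is shorter given the classical references and builds machinery ($\mathit{\Omega}$, $\mathcal{C}$, $\mathcal{D}$) that it reuses for its minimum-energy result (Theorem 3, via Dykstra's algorithm); your route is self-contained, gives the quantitative bound $\sum_k\sum_i\lambda_i\|u^{(k)}-P_{C_i}u^{(k)}\|^2\le\|u^{(0)}-y\|^2$, and, as a bonus the paper only obtains later by a separate argument, your spectral computation identifies the affine-case limit explicitly as $y_0+E(\{1\})(u^{(0)}-y_0)$, which is precisely the orthogonal projection of $u^{(0)}$ onto $\bigcap_i C_i$; taking $u^{(0)}=0$ this already yields the minimum-energy control of the paper's Theorem 3.
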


\begin{proof}
    The proof can be facilitated by considering the product space $\mathit{\Omega}=\mathcal{U} \times \cdots \times \mathcal{U}$ constituted by a Cartesian product of $N$ copies of $\mathcal{U}$ equipped with the inner product $\langle \cdot , \cdot  \rangle_{\mathit{\Omega}} : \mathit{\Omega}\times \mathit{\Omega} \to \mathbb{R}$ defined by $\langle U,V \rangle_{\mathit{\Omega}} = \sum_{i=1}^N \lambda_i \langle u_i, v_i \rangle_\mathcal{U}$, where $U = (u_1, \ldots , u_N), V = (v_1, \ldots , v_N)$ and $\langle \cdot , \cdot  \rangle_\mathcal{U}$ is the inner product in $\mathcal{U}$. Now, let's construct two closed and convex sets $\mathcal{C}$ and $\mathcal{D}$ in $\mathit{\Omega}$, defined by
    \begin{align}
      \mathcal{C} &= C_1 \times \cdots \times C_N\label{equ: def C}, \\
      \mathcal{D} &= \{U\in \mathit{\Omega} \:|\: u_1= \cdots =u_N \}\label{equ: def D}.
    \end{align}
    Let us associate each $u^{(k)}\in \mathcal{U}$ with a unique element $U^{(k)} \in \mathcal{D}$ defined by $U^{(k)} = (u^{(k)}, \ldots , u^{(k)})$, such that if $\{U^{(k)}\}$ converges to $U^*:=(u^*, \ldots , u^*)\in \mathcal{D}$, then $\{u^{(k)}\}$ converges to $u^*\in \mathcal{U}$.


    For the sequence $\{u^{(k)}\}$ generated by in \eqref{equ: weighted projection}, we can prove that the associated sequence $\{U^{(k)}\}$ satisfies $U^{(k+1)} = P_{\mathcal{D}} P_{\mathcal{C}}U^{(k)}$ (see Appendix \ref{appendix: pf weighted projection}). So the sequence $$\{U^{(k)} = \underbrace{P_{\mathcal{D}}P_{\mathcal{C}} \cdots P_{\mathcal{D}}P_{\mathcal{C}}}_{k\text{ times}}U^{(0)}\}$$ is an alternating projection onto $\mathcal{C}$ and $\mathcal{D}$. By the von Neuman alternating projection algorithm in Hilbert space \cite{von1950functional}, if $\mathcal{C}\cap \mathcal{D} \neq \emptyset$, then $U^{(k)}$ converges to $U^* \in \mathcal{C}\cap \mathcal{D}$ weakly \cite{bauschke1996projection}. Equivalently if $\bigcap_{i=1}^N C_i\neq \emptyset$, then $u^{(k)}$ converges to $u^*\in \bigcap_{i=1}^N C_i$ weakly. Furthermore, if $\mathcal{C}$ and $\mathcal{D}$ are closed affine subspaces, then $U^{(k)}\to U^*$ in norm when $\mathcal{C}\cap \mathcal{D}\neq \emptyset$, which is a direct application of the results in \cite{von1950functional}. This implies that $u^{(k)}\to u^*$ in norm when $\bigcap_{i=1}^N C_i\neq \emptyset$, if $C_i$'s are closed affine subspaces.
\end{proof}

Figures \ref{fig: geo_weighted1} and \ref{fig: geo_weighted2} provide a schematic illustration of using iterative weighted projections in \eqref{equ: weighted projection} to find a feasible point in the intersection of convex sets, or, equivalently, to solve the convex feasibility problem formulated in \eqref{equ: convex feasibility problem}. Figure \ref{fig: geo_weighted1} depicts the case in which an equally weighted iterative scheme is applied, from an arbitrary initial point $u^{(0)}$, to find a point in the intersection of two intersected convex sets. The process is to project $u^{(0)}$ alternatively 
onto $C_1$ and $C_2$, denoted $P_{C_1}u^{(0)}$ and $P_{C_2}u^{(0)}$, respectively, and then obtain $u^{(1)} = \frac{1}{2}(P_{C_1}u^{(0)} + P_{C_2}u^{(0)})$. Continuing this process generates a sequence of points $\{u^{(k)}\}$, and the iterations finally converge to a point, say $u^{(k)}\to u^*\in C_1\cap C_2$ as $k\to\infty$. On the other hand, if $C_1\cap C_2=\emptyset$, the procedure may still be convergent, but not to a point of interest, i.e., $u^*\notin C_1$ and $u^*\notin C_2$, as displayed in Figure \ref{fig: geo_weighted2}.

\begin{figure}[h]
  \centering
  \adjustbox{minipage=1em,valign=t}{\subcaption{}\label{fig: geo_weighted1}}%
  \begin{subfigure}[b]{0.45\linewidth}
    \centering\includegraphics[width=0.6\linewidth]{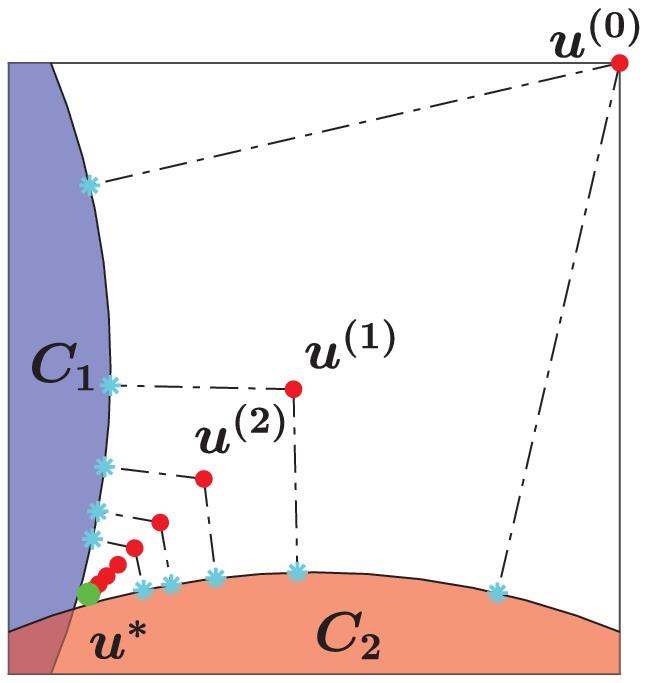}
  \end{subfigure}
  \hspace{-8pt}
  \adjustbox{minipage=1em,valign=t}{\subcaption{}\label{fig: geo_weighted2}}%
  \begin{subfigure}[b]{0.45\linewidth}
    \centering\includegraphics[width=0.6\linewidth]{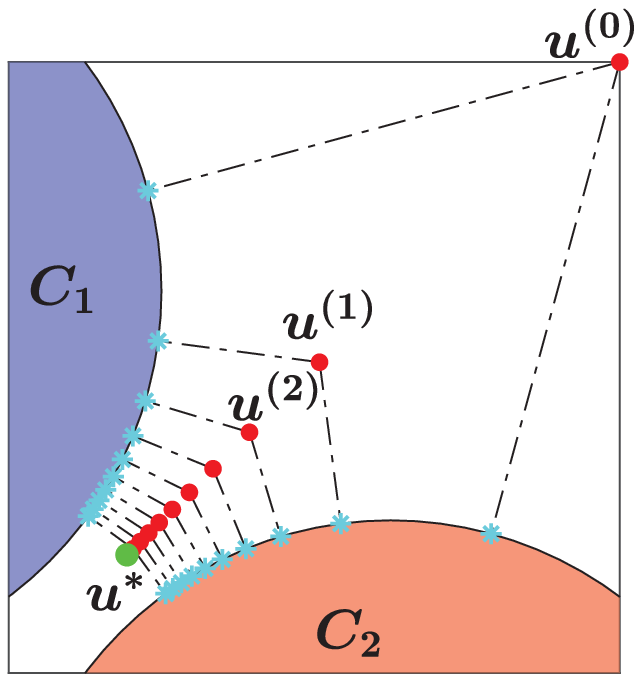} 
  \end{subfigure}
  \caption{Illustration of iterative weighted projections. $C_i, i = 1,2$ denotes the set of control law that steers the sub-system $\beta_i$ to the target state. $u^{(k+1)} = \frac{1}{2}(P_{C_1}u^{(k)} + P_{C_2}u^{(k)})$ for $k = 0,1,2,\ldots$ (a) If $C_1\cap C_2 \neq \emptyset$, then the weighted projection converges to a point in $C_1\cap C_2$. (b) If $C_1\cap C_2 = \emptyset$, then the weighted projection may still converge, however to a point outside of $C_1$ and $C_2$.}
\end{figure}

\subsection{Computing Ensemble Control Laws and Examining Ensemble Controllability}
\label{sec:ec_law}
Theorem \ref{thm : weighted projection algorithm} provides a systematic approach and a powerful means to compute a feasible point in the intersection of finitely many closed and convex sets in a Hilbert space by iterative weighted projections. This feasible point corresponds to a feasible ensemble control law for steering the ensemble system in \eqref{eq:linear_sample} from $X(0,\b_i)$ to $X_F(\b_i)$, $i = 1, \ldots , N$. Most importantly, a distinct feature of the iterative weighted projection algorithm is its capability to verify the existence of a nonempty intersection among the given convex sets through computing $\{u^{(k)}\}$ using \eqref{equ: weighted projection}. 

This validation is of particular importance in the context of ensemble control since it informs whether the ensemble states are reachable and, further, the ensemble is controllable. In particular, for a linear ensemble, because the admissible control set defined in \eqref{eq:Ci} is an affine subspace of $\mathcal{U}$, by the contraposition of Theorem \ref{thm : weighted projection algorithm}, if $\{u^{(k)}\}$ does not converge to a point in $\mathcal{U}$ in norm (either $\|u^{(k)}\|_{\mathcal{U}}\to \infty $ or $u^{(k)}$ oscillates because of weak convergence), then it must hold that $\bigcap_{i=1}^N C_i = \emptyset$, and hence the system in \eqref{eq:linear_sample} is not ensemble controllable because there exists no common control $u$ that will simultaneously steer the entire ensemble to $X_F(\b_i)$.

On the other hand, if $\{u^{(k)}\}$ converges in norm with $u^{(k)}\to u^*$, then there are two possible cases as shown in Figures \ref{fig: geo_weighted1} and \ref{fig: geo_weighted2}. To distinguish them, one can simply apply the convergent control law $u^*$ to the linear ensemble in \eqref{eq:linear_sample}. If $u^*$ steers the ensemble to the desired target state $X_F(\beta)$, then we have the case $\bigcap_{i=1}^N C_i\neq \emptyset$ (Figure \ref{fig: geo_weighted1}). If $u^*$ fails to steer the system to $X_F(\beta)$, then it must hold that $\bigcap_{i=1}^N C_i = \emptyset$ (Figure \ref{fig: geo_weighted2}) by the contraposition of Theorem \ref{thm : weighted projection algorithm}.
As a result, this convergence property renders a rigorous and tractable numerical approach to examine the reachability of an ensemble system and to systematically design an ensemble control law, as described in the following theorem.

\begin{thm} 
  \label{thm: feasible control and controllability for linear ensemble}
  Consider the linear ensemble system in \eqref{eq:linear_sample}. Let $\{u^{(k)}\}$ be a control sequence generated according to the iterative weighted projections in \eqref{equ: weighted projection}, given by the explicit form
  \begin{align}
      \label{equ: update rule for linear ensemble}
      u^{(k+1)} &=  \big(I_d - \sum_{i=1}^N \lambda_i L_i^*(L_i L_i^*)^{-1}L_i\big)u^{(k)} + \sum_{i=1}^N \lambda_i L_i^*(L_i L_i^*)^{-1}\xi_i,
  \end{align}
  with an arbitrary initialization $u^{(0)} \in L^2([0, T], \mathbb{R}^m)$, where $I_d$ is the identity operator in $L^2([0, T]; \mathbb{R}^m)$, $L_i$ and $\xi_i$ are defined in \eqref{eq:Li} and \eqref{eq:xi_i}, respectively; $L_i^*$ denotes the adjoint operator of $L_i$; and $\lambda_i\in (0, 1)$ satisfies $\sum_{i=1}^N \lambda_i = 1$. Then, $X_F(\b_i)$ is ensemble reachable from $X_0(\b_i)$ if and only if $\{u^{(k)}\}$ converges in norm to $u^*\in L^2([0, T], \mathbb{R}^m)$ and $u^*$ satisfies $L_iu^*=\xi_i$ for $i=1,\ldots,N$.
\end{thm}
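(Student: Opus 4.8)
The plan is to show that the explicit iteration \eqref{equ: update rule for linear ensemble} is exactly the weighted projection scheme \eqref{equ: weighted projection} specialized to the sets $C_i$ of Lemma \ref{lem:convex}, and then to read off the reachability characterization from the affine-subspace case of Theorem \ref{thm : weighted projection algorithm}.

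First I would compute the projection operator $P_{C_i}$ in closed form. Since $C_i=\{u:L_iu=\xi_i\}$ is a closed affine subspace, the projection of $u$ onto $C_i$ solves the minimization of $\|v-u\|$ subject to $L_iv=\xi_i$; writing the optimality condition $v-u\in\operatorname{range}(L_i^*)$ as $v=u+L_i^*\mu$ and enforcing the constraint gives $L_iL_i^*\mu=\xi_i-L_iu$, hence
\begin{equation*}
  P_{C_i}u = u - L_i^*(L_iL_i^*)^{-1}(L_iu-\xi_i).
\end{equation*}
This step requires that $L_iL_i^*$ be invertible, which holds exactly when $L_i$ is surjective, i.e. when the $i$-th sampled system \eqref{eq:linear_sample} is controllable; I would state this as the standing assumption guaranteeing that each $C_i$ is nonempty and the formula well-defined. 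Substituting this expression into \eqref{equ: weighted projection} and using $\sum_{i=1}^N\lambda_i=1$ to collect the $u^{(k)}$ terms immediately yields \eqref{equ: update rule for linear ensemble}, so the two iterations coincide. This identification is the computational crux of the argument.

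With the iterations identified, the equivalence follows from Theorem \ref{thm : weighted projection algorithm}. I would first observe that, for the finite sample, $X_F(\b_i)$ being ensemble reachable from $X_0(\b_i)$ for all $i$ is equivalent to the existence of a single control steering every subsystem exactly, i.e. to $\bigcap_{i=1}^N C_i\neq\emptyset$; here the $\e$-approximate formulation of Definition \ref{def:controllability} collapses to exact reachability because the joint reachable set $\{(L_1u,\dots,L_Nu):u\in L^2\}$ is a subspace of $\mathbb{R}^{nN}$ and hence closed. For the forward direction, if the states are reachable then $\bigcap_{i=1}^N C_i\neq\emptyset$, and since each $C_i$ is a closed affine subspace the affine case of Theorem \ref{thm : weighted projection algorithm} gives norm convergence $u^{(k)}\to u^*\in\bigcap_{i=1}^N C_i$, so $u^*$ necessarily satisfies $L_iu^*=\xi_i$ for every $i$. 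Conversely, if $\{u^{(k)}\}$ converges in norm to some $u^*$ with $L_iu^*=\xi_i$ for all $i$, then $u^*\in\bigcap_{i=1}^N C_i$, the intersection is nonempty, and $u^*$ is itself a feasible ensemble control, establishing reachability.

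The point I expect to require the most care is the necessity of the extra condition $L_iu^*=\xi_i$ rather than mere norm convergence. I would emphasize that convergence alone does not certify reachability: when $\bigcap_{i=1}^N C_i=\emptyset$ the iteration may still converge to a limit lying outside the $C_i$ (the scenario of Figure \ref{fig: geo_weighted2}), as already happens for two parallel, non-intersecting affine subspaces, where $\sum_{i}\lambda_iP_{C_i}$ projects onto an intermediate affine subspace. Hence the verification $L_iu^*=\xi_i$ is precisely the test distinguishing the feasible case (Figure \ref{fig: geo_weighted1}) from the infeasible one, and it is what upgrades the one-directional convergence guarantee of Theorem \ref{thm : weighted projection algorithm} into the stated equivalence.
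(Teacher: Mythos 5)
Your proposal is correct and takes essentially the same route as the paper's proof: derive the closed-form projection $P_{C_i}u = \big(I_d - L_i^*(L_iL_i^*)^{-1}L_i\big)u + L_i^*(L_iL_i^*)^{-1}\xi_i$ under the assumption that each sampled subsystem is controllable (so that $L_iL_i^*$ is invertible), identify the resulting iteration with \eqref{equ: weighted projection}, and then settle the equivalence via the affine-subspace case of Theorem \ref{thm : weighted projection algorithm} together with the feasibility check $L_iu^*=\xi_i$, exactly mirroring the paper's discussion of Figures \ref{fig: geo_weighted1} and \ref{fig: geo_weighted2}. Your one addition — observing that $\e$-approximate reachability of the finite sample collapses to exact reachability because $\{(L_1u,\dots,L_Nu):u\in L^2\}$ is a finite-dimensional, hence closed, subspace of $\mathbb{R}^{nN}$ — is a useful tightening of a step the paper leaves implicit, but it does not constitute a different argument.
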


\begin{proof}
    The proof directly follows Lemma \ref{lem:convex}, Theorem \ref{thm : weighted projection algorithm}, and the analysis above this theorem. 

    What remains to show here is to derive the explicit expression of the projection operator $P_{C_i}$ in \eqref{equ: weighted projection}. 

    We observe that finding the projection of a vector $u\in L^2([0,T],\mathbb{R}^m)$ onto a closed subspace $C_i$, denoted $P_{C_i}u$, is equivalent to solving the least-squares problem,
    \begin{equation}
        \label{equ: Projection operator formulation I}
        \begin{array}{ll}
            \underset{v\in L^2([0,T],\mathbb{R}^m)} {\text{min}} & \|u - v\|_2,\\
            \qquad\ {\rm s.t.} & L_i v = \xi_i.
        \end{array}
    \end{equation}
     For the ensemble control problem, we are interested in the case when the system indexed by each $\beta_i$ is controllable. Hence we have $\mathcal{R}(L_i) = \mathbb{R}^n$. Then from the knowledge of functional analysis, we have $\mathcal{R}(L_iL_i^*) = \mathcal{R}(L_i) =  \mathbb{R}^n$, and the solution of \eqref{equ: Projection operator formulation I} can be written as
    \begin{align}
        P_{C_i} u = (I_d - L_i^*(L_i L_i^*)^{-1}L_i)u + L_i^*(L_i L_i^*)^{-1}\xi_i. \label{equ: projection P_i}
    \end{align}
    Substituting in \eqref{equ: projection P_i} into in \eqref{equ: weighted projection} yields the update rule in \eqref{equ: update rule for linear ensemble}. 
\end{proof}

\section{The Minimum-energy Control for Linear Ensemble Systems}
\label{sec: optimal control}
In Section \ref{sec:feasible control}, we have formulated the ensemble control design as a convex feasibility problem, through which we were able to calculate a feasible ensemble control law. In this section, we take a step further and extend the developed method to find optimal ensemble controls for linear ensemble systems. In particular, we study the minimum-energy control for the ensemble system in \eqref{eq:linear_sample}.

\subsection{The Optimization Formulation of Minimum-Energy Ensemble Control}
\label{subsec: orthonormal projection and Minimum-energy Ensemble Control}
Adopting the same idea of formulating the ensemble control design as a feasibility problem, the minimum-energy control of the ensemble system in \eqref{eq:linear_sample} can be cast as an optimization problem, given by
\begin{equation}\label{eq:minimum-energy control problem}
    \begin{array}{cc}
    \text{min} & \|u\|_2^2,\\
    \text{s.t.} & u \in \displaystyle\bigcap_{i=1}^N C_i,
    \end{array}
\end{equation} 
where $\|u\|_2^2=\int_0^T u'u\, dt$ and $C_i$ are defined as in \eqref{eq:Ci}.

An intriguing fact in this optimization is that the objective function represents the distance between $u$ and the origin, i.e., the zero function in $L^2([0, T], \mathbb{R}^m)$. Therefore, minimizing the energy of $u$ is nothing but finding the point in $\bigcap_{i=1}^N C_i$ that is closest to the origin in $L^2([0, T], \mathbb{R}^m)$, which is the `orthogonal projection' of the origin onto the set $\bigcap_{i=1}^N C_i$.

\subsection{Orthogonal Projections onto Intersections of Convex Sets}
Let $\mathcal{U}$ be a Hilbert space and $C_i$ are closed and convex sets in $\mathcal{U}$. It is in general difficult to directly compute the projection of a vector $u\in\mathcal{U}$ onto $\bigcap_{i=1}^N C_i$. Fortunately, when the projection onto each $C_i$, i.e., $P_{C_i} u$, is easy to compute, then the projection onto the intersection $\bigcap_{i=1}^N C_i$ can be easily obtained by cyclic projections onto individual sets. Dykstra's algorithm \cite{boyle1986method} is a notable method performing such computations, described in Algorithm \ref{algo:Dykstra's algorithm}. 

\begin{algorithm}  
  \caption{Dykstra's algorithm}  \label{algo:Dykstra's algorithm}
  \begin{algorithmic} 
    \Function{Dykstra}{$C_1, \ldots , C_N, u^{(0)}$}
    \State \textbf{Initialize}: 
    \begin{enumerate}\setlength\itemindent{25pt}
      \item [(0)] Assign $u_0^{(1)} = u^{(0)}$.
      \item [(1)] Project $u_0^{(1)}$ onto $C_1$ to obtain $u_1^{(1)}$. 
      \item [] Compute $I_1^{(1)} = u_1^{(1)} - u_0^{(1)}$.
      \item [] $\vdots$
      \item [(N)] Project $u_{N-1}^{(1)}$ onto $C_N$ to obtain $u_N^{(1)}$. 
      \item [] Compute $I_N^{(1)} = u_N^{(1)} - u_{N-1}^{(1)}$.
    \end{enumerate}
    \For{$k \gets 2 ,3, \dots$}:
    \begin{enumerate}\setlength\itemindent{25pt}
      \item [(0)] Assign $u_0^{(k)} = u_N^{(k-1)}$.
      \item [(1)] Project $u_0^{(k)} - I_1^{(k-1)}$ onto $C_1$ to obtain $u_1^{(k)}$. 
      \item [] Compute $I_1^{(k)} = u_1^{(k)} - (u_0^{(k)} - I_1^{(k-1)})$.
      \item [] $\vdots$
      \item [(N)] Project $u_{N-1}^{(k)} - I_N^{(k-1)}$ onto $C_N$ to obtain $u_N^{(k)}$. 
      \item [] Compute $I_N^{(k)} = u_N^{(k)} - (u_{N-1}^{(k)} - I_N^{(k-1)})$.
    \end{enumerate}
    \EndFor
    \State\Return $\{u_1^{(k)}\}_{k=1}^{\infty }, \ldots ,\{u_{N-1}^{(k)}\}_{k=1}^{\infty }$
    \EndFunction
  \end{algorithmic}  
\end{algorithm}

\begin{lem}\label{lem: Dykstra's algorithm}
  Let $C_1, \ldots , C_N$ be closed and convex in a Hilbert space $\mathcal{U}$ and let $u^*$ be the orthogonal projection of $u^{(0)}$ onto $\bigcap_{i=1}^N C_i$, then the sequence $\{u_i^{(k)}\}$ obtained by Algorithm \ref{algo:Dykstra's algorithm} converges to $u^*$ in norm for any $1\leq i \leq N$. Furthermore, if $C_i$ is affine for all $i=1,\ldots,N$, then the offset $I_i^{(k-1)}$ in Algorithm \ref{algo:Dykstra's algorithm} can be set as $0$ for all $k=2,3,\ldots $, with the same guarantee that $\{u_i^{(k)}\}$ converges to $u^*$ in norm for all $i = 1, \ldots , N$.
\end{lem}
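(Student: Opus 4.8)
The plan is to treat the two assertions separately. The first assertion---that Dykstra's algorithm in Algorithm~\ref{algo:Dykstra's algorithm} produces, for arbitrary closed convex $C_1,\ldots,C_N$, sequences $\{u_i^{(k)}\}$ converging in norm to the nearest point $u^*=P_{\bigcap_i C_i}(u^{(0)})$---is the classical Boyle--Dykstra theorem, which is established directly in the Hilbert space setting in \cite{boyle1986method}, so I would simply invoke it. The substantive work lies in the affine case, where the claim is that deleting the correction terms $I_i^{(k-1)}$ collapses Dykstra's iteration onto the plain method of cyclic projections, and that this simpler scheme still lands on the \emph{nearest} point. The whole point of the affine hypothesis is exactly this: for general convex sets cyclic projection converges only to \emph{some} point of the intersection, which is why Dykstra's offsets are needed; for affine sets they are superfluous.

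Concretely, suppose every $C_i=\{u:L_iu=\xi_i\}$ is a closed affine subspace and set $I_i^{(k-1)}=0$. Then step $(i)$ of each cycle becomes $u_i^{(k)}=P_{C_i}u_{i-1}^{(k)}$ with $u_0^{(k)}=u_N^{(k-1)}$, i.e.\ ordinary cyclic (sequential) projection. Since $\bigcap_i C_i\neq\emptyset$ (it contains $u^*$), I would fix a reference point $p\in\bigcap_i C_i$ and pass to the linear subspaces $M_i:=C_i-p$, which are closed because each $C_i$ is. Using the affine structure one has $P_{C_i}u=p+P_{M_i}(u-p)$, where $P_{M_i}$ is the orthogonal projection onto the subspace $M_i$; in the shifted variable $w:=u-p$ the cyclic iteration is therefore exactly $w^{(k)}=(P_{M_N}\cdots P_{M_1})^k\,w^{(0)}$ with $w^{(0)}=u^{(0)}-p$.

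At this stage I would invoke the norm convergence of cyclic projections onto finitely many closed subspaces (von Neumann's alternating projection theorem and its standard $N$-set extension, cf.\ \cite{von1950functional,bauschke1996projection}): $(P_{M_N}\cdots P_{M_1})^k w^{(0)}\to P_{M}w^{(0)}$ in norm, where $M:=\bigcap_{i=1}^N M_i$. Translating back, the full-cycle iterates converge to $p+P_M(u^{(0)}-p)$. It then remains to identify this limit with $u^*$: since $\bigcap_i C_i=p+M$ is an affine translate of the subspace $M$, the orthogonal projection of $u^{(0)}$ onto it is precisely $p+P_M(u^{(0)}-p)$, so the limit equals $u^*=P_{\bigcap_i C_i}(u^{(0)})$---and, reassuringly, this value is independent of the choice of $p$.

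Finally, I would upgrade convergence of the full-cycle iterates to all the intermediate iterates $\{u_i^{(k)}\}$. Because $u^*\in C_i$ gives $P_{C_i}u^*=u^*$ (equivalently $P_{M}w^{(0)}\in M\subseteq M_i$, so each $P_{M_i}$ fixes it), continuity of the partial products $P_{M_i}\cdots P_{M_1}$ yields $u_i^{(k)}\to u^*$ for every $i$. The main obstacle is conceptual rather than computational: one must justify that cyclic projection recovers the \emph{nearest} point, not merely a feasible point, and this is exactly where the affine hypothesis is indispensable. The cleanest route is the subspace-translation reduction above, which converts the problem into the linear setting governed by von Neumann--Halperin; verifying that the limit is genuinely the metric projection is the crux of the argument.
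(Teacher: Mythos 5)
Your proposal is correct, and it is in fact more complete than what the paper offers: the paper's entire ``proof'' of this lemma is a one-line citation to Theorem~2 of \cite{boyle1986method}, with no argument given for either assertion. For the first assertion (general closed convex sets) you do exactly the same thing---invoke Boyle--Dykstra---so there is nothing to compare there. The genuine difference is in the affine case. The paper implicitly lets the citation carry this claim too, but Theorem~2 of \cite{boyle1986method} concerns the algorithm \emph{with} the correction terms; the statement that the offsets $I_i^{(k-1)}$ may be dropped when every $C_i$ is affine, and that plain cyclic projection then still converges to the \emph{nearest} point rather than merely some feasible point, is a separate fact. Your reduction supplies precisely the missing argument: fix $p\in\bigcap_i C_i$, translate to the closed subspaces $M_i=C_i-p$ using $P_{C_i}u=p+P_{M_i}(u-p)$, apply the von Neumann--Halperin theorem to get $(P_{M_N}\cdots P_{M_1})^k w^{(0)}\to P_M w^{(0)}$ in norm with $M=\bigcap_i M_i$, identify the limit $p+P_M(u^{(0)}-p)$ with the metric projection onto the affine set $p+M$, and then propagate convergence to the intermediate iterates via continuity of the partial products and the fact that they fix $u^*$. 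Each of these steps is sound (including the closedness of the $M_i$ and the independence of the limit from the choice of $p$), and your observation that the affine hypothesis is exactly what upgrades ``a point of the intersection'' to ``the nearest point'' is the right conceptual crux. In short: same approach as the paper on the first claim, and a self-contained argument---which the paper omits---for the second.
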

\begin{proof}
  See the proof of Theorem 2 in \cite{boyle1986method}.
\end{proof}

\begin{rem} (\textbf{Dykstra's algorithm v.s. iterative weighted projection algorithm})
  It is worth pointing out that the Dykstra's algorithm involves an offset $I_i^{(k)}$ when projecting $u_i^{(k+1)}$ onto $C_i$ in the next iteration, which eventually results in the point in $\bigcap_{i=1}^N C_i$ that is the closest to $u^{(0)}$; while the weighted projection algorithm, with no offset terms, only returns one feasible 
  point in $\bigcap_{i=1}^N C_i$, not necessarily the one closest to $u^{(0)}$.
\end{rem}

\subsection{Computing Minimum-Energy Control for Linear Ensembles by Iterative Weighted Projections}
\label{subsec: optimal control linear}
From the discussion in Section \ref{subsec: orthonormal projection and Minimum-energy Ensemble Control}, we have interpreted the minimum-energy ensemble control problem as the projection of the zero function, i.e., $0\in L^2([0, T], \mathbb{R}^m)$, onto the intersection of the admissible control sets, formulated as in \eqref{eq:minimum-energy control problem}, which can be computed using Dykstra's algorithm.

As illustrated in the proof of Theorem \ref{thm : weighted projection algorithm}, the procedure of projecting $0$ onto $\bigcap_{i=1}^N C_i$ is equivalent to projecting $0$ onto $\mathcal{C}\cap \mathcal{D}$ using Dykstra's algorithm, where $\mathcal{C}$ and $\mathcal{D}$ are defined in \eqref{equ: def C} and in \eqref{equ: def D}, respectively. Because both $\mathcal{C}$ and $\mathcal{D}$ are affine subspaces, by Lemma \ref{lem: Dykstra's algorithm}, setting the offsets $I_i^{(k-1)} = 0$ for $i=1,2$ and $k=2,3,\ldots$ yields a sequence $\{u^{(k)}\}$ converging to the projection of $0$ on $\mathcal{C}\cap \mathcal{D}$,  which gives rise to the following theorem.

\begin{thm} \emph{(Minimum-energy control for linear ensemble systems)}
  \label{thm: minimum-energy}
  Consider the linear ensemble system in \eqref{eq:linear_sample} with $L_i$, $\xi_i$, and $C_i$ defined as in \eqref{eq:Li}, \eqref{eq:xi_i} and \eqref{eq:Ci}, respectively. If $\bigcap_{i=1}^N C_i \neq \emptyset$, then the control sequence $\{u^{(k)}\}$, with $u^{(0)}=0 \in L^2([0, T], \mathbb{R}^m)$, generated by the iterations,
    \begin{align*}
       u^{(k+1)} = (I_d &- \sum_{i=1}^N \lambda_i L_i^*(L_i L_i^*)^{-1}L_i)u^{(k)}+ \sum_{i=1}^N \lambda_i L_i^*(L_i L_i^*)^{-1}\xi_i,
    \end{align*}
    for $k=0,1,2,\ldots,$ converges to the minimum-energy ensemble control that steers the ensemble from $X_0(\b_i)$ to $X_F(\b_i)$ in norm, where $I_d$ is the identity operator in $L^2([0, T], \mathbb{R}^m)$; $L_i^*$ denotes the adjoint operator of $L_i$; and $\lambda_i\in(0, 1)$ satisfying $\sum_{i=1}^N \lambda_i = 1$.
\end{thm}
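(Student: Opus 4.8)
The plan is to reduce the claim to the identification already made in Section~\ref{subsec: orthonormal projection and Minimum-energy Ensemble Control}: the minimum-energy control solving \eqref{eq:minimum-energy control problem} is precisely the orthogonal projection of the origin $0\in L^2([0,T],\mathbb{R}^m)$ onto $\bigcap_{i=1}^N C_i$. The entire burden is therefore to verify that the stated iteration, initialized at $u^{(0)}=0$, converges in norm to \emph{this particular} projection rather than to an arbitrary feasible point. Since Theorem~\ref{thm : weighted projection algorithm} only guarantees convergence to \emph{some} point of the intersection, the special role of the initialization $u^{(0)}=0$ and of the affine structure of the $C_i$'s must be exploited.

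First I would lift the problem to the product space $\mathit{\Omega}$ and recall the two sets $\mathcal{C}$ and $\mathcal{D}$ from \eqref{equ: def C}--\eqref{equ: def D}, together with the weighted inner product $\langle U,V\rangle_{\mathit{\Omega}}=\sum_{i=1}^N\lambda_i\langle u_i,v_i\rangle_{\mathcal{U}}$. The key observation is that the diagonal embedding $u\mapsto(u,\ldots,u)$ is an isometry from $\mathcal{U}$ onto $\mathcal{D}$: for $U=(u,\ldots,u)$ one has $\|U\|_{\mathit{\Omega}}^2=\sum_{i=1}^N\lambda_i\|u\|_{\mathcal{U}}^2=\|u\|_{\mathcal{U}}^2$ because $\sum_i\lambda_i=1$. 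Consequently, for any $\tilde U=(u,\ldots,u)\in\mathcal{C}\cap\mathcal{D}$ (which forces $u\in\bigcap_{i=1}^N C_i$) the distance satisfies $\|0-\tilde U\|_{\mathit{\Omega}}=\|u\|_{\mathcal{U}}$, so minimizing the $\mathit{\Omega}$-distance from the origin to $\mathcal{C}\cap\mathcal{D}$ is identical to minimizing $\|u\|_{\mathcal{U}}$ over $\bigcap_{i=1}^N C_i$. Hence $P_{\mathcal{C}\cap\mathcal{D}}(0,\ldots,0)$ is exactly the diagonal lift of the minimum-energy control.

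Next I would invoke the alternating-projection representation established in the proof of Theorem~\ref{thm : weighted projection algorithm}: the lifted sequence $U^{(k)}=(u^{(k)},\ldots,u^{(k)})$ generated by the weighted projection \eqref{equ: weighted projection} satisfies $U^{(k+1)}=P_{\mathcal{D}}P_{\mathcal{C}}U^{(k)}$, i.e.\ it is the cyclic projection onto the two affine subspaces $\mathcal{C}$ and $\mathcal{D}$. This cyclic projection is precisely Dykstra's algorithm (Algorithm~\ref{algo:Dykstra's algorithm}) applied to the pair $(\mathcal{C},\mathcal{D})$ with all offsets set to zero. Since both $\mathcal{C}$ and $\mathcal{D}$ are closed affine subspaces and $\mathcal{C}\cap\mathcal{D}\neq\emptyset$ (equivalent to $\bigcap_{i=1}^N C_i\neq\emptyset$ by hypothesis), the affine clause of Lemma~\ref{lem: Dykstra's algorithm} applies: the zero-offset iterates converge in norm to the orthogonal projection of the initial point $U^{(0)}$ onto $\mathcal{C}\cap\mathcal{D}$. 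Taking $U^{(0)}=(0,\ldots,0)$ and combining with the isometry identification of the previous step yields $U^{(k)}\to P_{\mathcal{C}\cap\mathcal{D}}(0,\ldots,0)$, hence $u^{(k)}\to u^\star$ in norm, where $u^\star$ is the minimum-energy control. Finally, substituting the closed-form projection \eqref{equ: projection P_i} from the proof of Theorem~\ref{thm: feasible control and controllability for linear ensemble} into \eqref{equ: weighted projection} produces the explicit update displayed in the statement.

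The step I expect to be the main obstacle is the careful matching in the previous paragraph of three a priori different iterations --- the weighted projection in $\mathcal{U}$, the cyclic (von Neumann) projection $P_{\mathcal{D}}P_{\mathcal{C}}$ in $\mathit{\Omega}$, and zero-offset Dykstra on $(\mathcal{C},\mathcal{D})$ --- and in particular justifying that the affine clause of Lemma~\ref{lem: Dykstra's algorithm}, which permits dropping the offsets while still landing on the \emph{projection} of the starting point, is exactly what upgrades the generic weak/feasible convergence of Theorem~\ref{thm : weighted projection algorithm} to norm convergence toward the minimum-energy solution. Once the weighted inner product is seen to make $u\mapsto(u,\ldots,u)$ an isometry and to make $\mathcal{C}\cap\mathcal{D}$ the exact diagonal lift of $\bigcap_{i=1}^N C_i$, the remaining computations are routine and optimality is inherited directly from Dykstra's guarantee at the initialization $u^{(0)}=0$.
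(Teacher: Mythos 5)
Your proposal is correct and follows essentially the same route as the paper's own proof: lifting to the product space $\mathit{\Omega}$, using the isometry $\|(u,\ldots,u)\|_{\mathit{\Omega}}=\|u\|_{\mathcal{U}}$ to identify the minimum-energy control with the orthogonal projection of the origin onto $\mathcal{C}\cap\mathcal{D}$, and invoking the affine case of Dykstra's algorithm (Lemma~\ref{lem: Dykstra's algorithm}) to show that the zero-offset alternating projections $P_{\mathcal{D}}P_{\mathcal{C}}$ initialized at $0$ converge in norm to that projection. Your write-up is in fact somewhat more explicit than the paper's about why the affine clause is what upgrades ``converges to some feasible point'' to ``converges to the minimum-norm feasible point,'' but the decomposition, the key lemmas, and the logic are the same.
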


\begin{proof}
  We first define the product space $\mathit{\Omega}$ and the augmented sets $\mathcal{C}$ and $\mathcal{D}$ in the same way as in the proof of Theorem \ref{thm : weighted projection algorithm}. Then, we associate each $u\in \bigcap_{i=1}^N C_i$ a unique element $U\in\mathcal{D}$ defined by $U=(u, \ldots ,u)$, so that we have $U\in \mathcal{C}\cap \mathcal{D}$, and $\|U\|^2_{\mathit{\Omega}} = \sum_{i=1}^N \lambda_i \|u\|^2_\mathcal{U} = \|u\|^2_\mathcal{U}$. This reformulates the optimization problem in \eqref{eq:minimum-energy control problem} into a new optimization problem over the product space $\mathit{\Omega}$ to minimize $\|U\|_\Omega^2$ subject to $U\in \mathcal{C} \cap \mathcal{D}$, which solution is provided by the orthogonal projection of $0\in \mathit{\Omega}$ that can be computed by the Dykstra's algorithm. Since $\mathcal{C}$ and $\mathcal{D}$ are both closed affine subspaces of $\mathit{\Omega}$, the Dykstra's algorithm on $\mathcal{C} \cap \mathcal{D}$ with $U^{(0)}=0 \in \mathit{\Omega}$ boils down to the iterative weighted projections on $\mathcal{C}$ and $\mathcal{D}$, which leads to the projection of $u^{(0)}=0 \in \mathcal{U}$ onto $\bigcap_{i=1}^N C_i$. The convergence of iterative projections on $\mathcal{C}$ and $\mathcal{D}$ has already been fully analyzed in Theorem \ref{thm : weighted projection algorithm}. Hence simply changing the initial condition in \eqref{equ: update rule for linear ensemble} to $u^{(0)} = 0$ provides a sequence $\{u^{(k)}\}$ converging to the minimum-energy control law of the ensemble. \qedhere
\end{proof}

\section{Ensemble Control under Constraints on Control Inputs}
\label{sec: ensemble control with constraints}
Despite the success of previous developments in ensemble control design \cite{zlotnik2012synthesis,zeng2017sampled,zeng2016moment,Li_IFAC17}, algorithms accommodating practical limitations on the control input, such as energy and power constraints, remain underdeveloped. In this section, we fill this gap and illustrate the flexibility of the convex-geometric framework established in Sections \ref{sec:feasible control} and \ref{sec: optimal control} for incorporating control constraints, which makes it a unified ensemble control design approach.

Specifically, we denote the set of constraints on the control signal as $G \subset L^2([0, T], \mathbb{R}^m)$. If $G$ is a convex set, then similar to the formulation in \eqref{equ: convex feasibility problem}, a constrained ensemble control design problem can be cast as a feasibility problem of the form,
\begin{equation}\label{equ: convex feasibility problem constrained}
    \begin{array}{cc}
    \text{min} & 0,~~~~~~~~\\
    \text{s.t.} & u \in \big(\bigcap_{i=1}^N C_i\big)\cap G,
    \end{array}
\end{equation}
which can be solved by the weighted projection algorithm presented in Theorem \ref{thm : weighted projection algorithm}.

\begin{cor} \label{cor:ensemble control under constraints}
    Consider the linear ensemble system in \eqref{eq:linear_sample} and denote $G$ as the set of constraints on the control input $u$, i.e., $u\in G$. If $(\bigcap_{i=1}^N C_i) \cap G \neq \emptyset$, then the sequence generated according to the weighted projection, given by
    \begin{align}
      \label{equ: update rule for linear ensemble constrained}
       u^{(k+1)} =  \lambda_0P_G u^{(k)} &+ \sum_{i=1}^N \lambda_i [(I_d - L_i^*(L_iL_i^*)^{-1}L_i)u^{(k)}+ L_i (L_iL_i^*)^{-1}\xi_i],
    \end{align}
    converges weakly to a feasible constrained ensemble control, where $L_i$, $\xi_i$, and $C_i$ are defined as in \eqref{eq:Li}, in \eqref{eq:xi_i}, and in \eqref{eq:Ci}, respectively, and $\lambda_0, \ldots , \lambda_N \in (0, 1)$ and $\sum_{i=0}^N \lambda_i = 1$.
\end{cor}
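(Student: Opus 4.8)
The plan is to recognize that the iteration \eqref{equ: update rule for linear ensemble constrained} is nothing but an instance of the iterative weighted projection scheme \eqref{equ: weighted projection} applied to the enlarged collection of $N+1$ sets $\{G, C_1, \ldots, C_N\}$, so that the conclusion will follow directly from Theorem \ref{thm : weighted projection algorithm} once the summands are correctly identified and the hypotheses are checked.

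First I would identify each term. By the projection formula \eqref{equ: projection P_i} established in the proof of Theorem \ref{thm: feasible control and controllability for linear ensemble}, the bracketed expression multiplying $\lambda_i$ in \eqref{equ: update rule for linear ensemble constrained} coincides with $P_{C_i}u^{(k)}$, the orthogonal projection of $u^{(k)}$ onto the affine set $C_i$. Consequently the update rule reads
\begin{equation*}
u^{(k+1)} = \lambda_0 P_G u^{(k)} + \sum_{i=1}^N \lambda_i P_{C_i} u^{(k)},
\end{equation*}
which is exactly of the form \eqref{equ: weighted projection} with the sets relabeled as $C_0 := G, C_1, \ldots, C_N$ and the convex weights $\lambda_0, \lambda_1, \ldots, \lambda_N \in (0,1)$ satisfying $\sum_{i=0}^N \lambda_i = 1$.

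Next I would verify the hypotheses of Theorem \ref{thm : weighted projection algorithm} for this enlarged collection. Each $C_i$ for $i = 1, \ldots, N$ is closed and convex by Lemma \ref{lem:convex}, and $G$ is closed and convex by assumption (closedness being precisely what is needed for the metric projection $P_G$ to be well defined and single valued, so that the right-hand side above makes sense). The nonemptiness requirement $\bigcap_{i=0}^N C_i \neq \emptyset$ becomes exactly $(\bigcap_{i=1}^N C_i) \cap G \neq \emptyset$, which is the standing assumption of the corollary. Theorem \ref{thm : weighted projection algorithm} then yields weak convergence of $\{u^{(k)}\}$ to some $u^* \in (\bigcap_{i=1}^N C_i) \cap G$, and any such limit is by definition a control that steers every sampled subsystem to its target while satisfying the constraint $u^* \in G$, i.e., a feasible constrained ensemble control.

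The step requiring the most care is the convergence mode. Although the $C_i$ are affine subspaces, the constraint set $G$ is in general only convex, so the enlarged collection is not uniformly affine; equivalently, in the product-space reformulation used to prove Theorem \ref{thm : weighted projection algorithm}, the set $\mathcal{C} = G \times C_1 \times \cdots \times C_N$ is convex but no longer an affine subspace, while $\mathcal{D}$ remains the diagonal in $N+1$ copies of $\mathcal{U}$. Hence only the weak-convergence branch of Theorem \ref{thm : weighted projection algorithm} is available, which is why the statement asserts weak convergence rather than norm convergence. Upgrading to norm convergence would demand extra structure on $G$ (for instance $G$ affine, or a suitable regularity condition making the alternating projection onto $\mathcal{C}$ and $\mathcal{D}$ norm-convergent), and is deliberately not claimed here.
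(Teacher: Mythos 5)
Your proof is correct and takes essentially the same route as the paper's: the paper's proof is a one-line appeal to Theorem \ref{thm : weighted projection algorithm} applied to the enlarged collection $G, C_1, \ldots, C_N$, which is exactly your argument. The details you fill in (identifying the bracketed term as $P_{C_i}u^{(k)}$ via \eqref{equ: projection P_i}, checking closedness and convexity of the $N+1$ sets, and explaining why only weak convergence is claimed since $G$ need not be an affine subspace) are all consistent with what the paper leaves implicit.
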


\begin{proof}
    The proof directly follows the proof of Theorem \ref{thm : weighted projection algorithm} by applying iterative weighted projections on the convex sets, $G, C_1, \ldots, C_N$.
\end{proof}

\begin{rem}[\textbf{Norm convergence for final state of linear ensembles}]
    Since $G$ is not necessarily a closed affine subspace, Corollary \ref{cor:ensemble control under constraints} only guarantees the weak convergence of the control sequence, i.e., $\{u^{(k)}\}\to u^*$ as $k\to\infty$. Nevertheless, the final ensemble state $X^{(k)}(T, \beta_i)$, under the control law $u^{(k)}$, is guaranteed to converge to the target ensemble state $X_F(\beta_i)$ in norm. This results from the definition of weak convergence, namely, if $u^{(k)}$ converges to $u^*$ weakly, then $Lu^{(k)}$ converges to $Lu^*$ in norm for any bounded linear operator $L:H \to \mathbb{R}$. Recall that the final ensemble state of the linear ensemble system in \eqref{eq:linear_sample} is $X^{(k)}(T, \beta_i) = \Phi(T, 0, \beta_i)X(0) + L_iu^{(k)}$, which is determined by the bounded linear operator $L_i:L^2([0,T],\mathbb{R}^m)\to\mathbb{R}^n$. Therefore $X^{(k)}(T, \beta_i)\to X_F(\beta_i)$ in norm as $k\to\infty$
\end{rem}

With the guarantee that iterative weighted projections converge, next, we show how to explicitly compute projections onto the constraint set $G$. In particular, we will present two cases in which the control energy or amplitude is limited.

\begin{lem}
  Let $G_2 := \{u\in L^2([0, T], \mathbb{R}^m)\:|\: \|u\|_{2} \leq M\}$, where $M >0$ is a constant. Then, for any $u\in L^2([0, T], \mathbb{R}^m)$ and $u\not\in G_2$, the orthogonal projection of $u$ onto $G_2$ is given by 
  \begin{equation}\label{equ: claim 2-norm}
        P_{G_2} u = \frac{u}{\|u\|_2}M.
    \end{equation}
\end{lem}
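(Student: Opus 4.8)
The plan is to recognize $G_2$ as the closed ball of radius $M$ centered at the zero function in the Hilbert space $L^2([0,T],\mathbb{R}^m)$. Since a norm ball is closed and convex, the Hilbert-space projection theorem guarantees that $P_{G_2}u$ exists and is the unique minimizer of $\|u-v\|_2$ over $v\in G_2$. Thus it suffices to exhibit a candidate and verify that it attains this minimum; I would take the natural radial candidate $v^\ast=\frac{M}{\|u\|_2}u$, which is well defined precisely because $u\notin G_2$ forces $\|u\|_2>M>0$, and which satisfies $\|v^\ast\|_2=M$, so that $v^\ast\in G_2$ (indeed it lies on the boundary sphere).

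For the verification I would proceed by a direct lower bound on the objective. For any $v\in G_2$, expanding the squared norm and applying the Cauchy--Schwarz inequality $\langle u,v\rangle\le\|u\|_2\|v\|_2$ gives
$$\|u-v\|_2^2=\|u\|_2^2-2\langle u,v\rangle+\|v\|_2^2\ge \|u\|_2^2-2\|u\|_2\|v\|_2+\|v\|_2^2=(\|u\|_2-\|v\|_2)^2.$$
Since $\|v\|_2\le M<\|u\|_2$, the right-hand side is minimized when $\|v\|_2=M$, yielding the uniform lower bound $\|u-v\|_2^2\ge(\|u\|_2-M)^2$. A short computation then shows the candidate meets this bound: using $\langle u,v^\ast\rangle=\frac{M}{\|u\|_2}\|u\|_2^2=M\|u\|_2$ together with $\|v^\ast\|_2=M$, one obtains $\|u-v^\ast\|_2^2=\|u\|_2^2-2M\|u\|_2+M^2=(\|u\|_2-M)^2$. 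Hence $v^\ast$ attains the minimum, and by uniqueness of the projection $P_{G_2}u=v^\ast=\frac{M}{\|u\|_2}u$, as claimed.

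Alternatively, I could bypass the lower bound and instead verify the variational characterization of projections onto convex sets, namely $\langle u-v^\ast,\,w-v^\ast\rangle\le 0$ for every $w\in G_2$; here $u-v^\ast=\frac{\|u\|_2-M}{\|u\|_2}u$ is a positive multiple of $u$, and the inequality reduces to $\langle u,w\rangle\le M\|u\|_2$, which is immediate from Cauchy--Schwarz and $\|w\|_2\le M$. This statement is essentially routine, so I do not anticipate a genuine obstacle; the only points requiring care are recording that $u\notin G_2$ makes $v^\ast$ well defined (and in particular that the division by $\|u\|_2$ is legitimate) and noting that the Cauchy--Schwarz step is tight exactly along the radial direction, which is what singles out $v^\ast$ as the unique minimizer.
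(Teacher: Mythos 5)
Your proof is correct and takes essentially the same route as the paper's: both establish the uniform lower bound $\|u-v\|_2 \ge \|u\|_2 - M$ for all $v \in G_2$ (the paper via the reverse triangle inequality, you via expanding the square and Cauchy--Schwarz, which is the same bound) and then verify that the radial candidate $\frac{M}{\|u\|_2}u$ attains it. Your explicit appeal to uniqueness of the projection onto a closed convex set, and the alternative argument via the variational inequality $\langle u-v^\ast, w-v^\ast\rangle \le 0$, are nice additions that tighten a point the paper leaves implicit, but they do not change the substance of the argument.
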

\begin{proof}
  For any $v\in G_2$, by triangle inequality, the norm of $u-v$ is bounded below by
        \begin{align}
            \label{equ: claim pf 1}
            \|u - v\|_2 & \geq \|u - 0\|_2 - \|v - 0\|_2  = \|u\|_2 - \|v\|_2.
        \end{align}
        Since $v\in G_2$, it holds that $\|v\| \leq M$. Hence \eqref{equ: claim pf 1} can be further bounded by
        \begin{equation}\label{equ: claim pf 2}
            \|u - v\| \geq \|u\|_2 - M.
        \end{equation}
        On the other hand, if we directly compute the norm of $u - \frac{u}{\|u\|_2} M$, it can be discovered that
        \begin{equation}\label{equ: claim pf 3}
            \|u - \frac{u}{\|u\|_2} M\|_2 = \|u\|_2(1 - \frac{M}{\|u\|_2}) = \|u\|_2 - M.
        \end{equation}
        Combining \eqref{equ: claim pf 2} and \eqref{equ: claim pf 3} yields
        \begin{align*}
            \|u - v\|_2 \geq \|u - \frac{u}{\|u\|_2} M\|_2, \quad\forall\, v\in G_2.
        \end{align*}
        Hence the projection is given by $P_{G_2} = \frac{u}{\|u\|_2}M.$
\end{proof}

\begin{lem}
  Let $G_\infty := \{u\in L^2([0, T], \mathbb{R}^m)\:|\:\underset{t\in [0, T]}{\text{\rm max}} |u(t)| \leq M\}$, where $M >0$ is a constant. Then, for any $u\in L^2([0, T], \mathbb{R}^m)$ and $u\not\in G_\infty $, the orthogonal projection of $u$ onto $G_\infty$ is given by 
  \begin{equation}\label{equ: claim inf-norm}
        \left(P_{G_{\infty }}u\right)_i(t) = \left\{ \begin{array}{ccl}
        M, & \text{if}& u_i(t) > M\\
        u_i(t), & \text{if}& -M\leq u_i(t)\leq M\\
        -M, & \text{if}& u_i(t) < -M
        \end{array} \right.,
    \end{equation}
    for $i = 1, \ldots , m$, where $(P_{G_{\infty}}u)_i$ and $u_i$ denote the $i^{\text{th}}$ component of $P_{G_{\infty}}u$ and $u$, respectively.
\end{lem}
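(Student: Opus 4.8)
The plan is to exploit the fact that the constraint defining $G_\infty$ is a pointwise box constraint, which renders the projection problem separable across both the time variable $t$ and the component index $i$. First I would record the elementary but essential reformulation: $u\in G_\infty$ if and only if $|u_i(t)|\le M$ for almost every $t\in[0,T]$ and every $i=1,\ldots,m$. From this it follows that $G_\infty$ is a closed and convex subset of $L^2([0,T],\mathbb{R}^m)$ --- convexity is immediate, and closedness follows because $L^2$-convergence admits an almost-everywhere convergent subsequence, which preserves the bound $|u_i|\le M$ a.e. Hence the orthogonal projection $P_{G_\infty}u$ exists and is unique, and it suffices to exhibit a minimizer of $\|u-v\|_2^2$ over $G_\infty$.

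Next I would write the objective as $\|u-v\|_2^2 = \sum_{i=1}^m \int_0^T (u_i(t)-v_i(t))^2\,dt$ and observe that, since the integrand is nonnegative and the admissibility condition $|v_i(t)|\le M$ decouples over $t$ and over $i$ with no coupling between distinct points or components, minimizing the integral reduces to minimizing the integrand pointwise. That is, for almost every $t$ and each $i$ I would solve the scalar problem $\min_{|w|\le M}(u_i(t)-w)^2$. This scalar problem is trivial: the strictly convex parabola $w\mapsto(a-w)^2$ has unconstrained minimizer $w=a$, so its minimizer over $[-M,M]$ is the clamp of $a$ to the interval, namely $M$ if $a>M$, $a$ if $-M\le a\le M$, and $-M$ if $a<-M$. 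Setting $a=u_i(t)$ reproduces exactly the claimed formula \eqref{equ: claim inf-norm}.

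I would then verify that the resulting pointwise minimizer $v^*$ is a bona fide element of $G_\infty$: it is measurable as the composition of the measurable map $u_i$ with the continuous clamping function, it is bounded by $M$ and hence belongs to $L^2$ on the finite interval $[0,T]$, and it satisfies $|v_i^*(t)|\le M$ by construction. Because $v^*$ minimizes the integrand pointwise and is admissible, it attains $\inf_{v\in G_\infty}\|u-v\|_2^2$, and by uniqueness of the projection onto a closed convex set, $v^*=P_{G_\infty}u$.

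The step to be careful about --- and the only genuine obstacle --- is the interchange between minimizing the integral and minimizing the integrand pointwise; this is legitimate precisely because the constraint involves no coupling across $t$ or across components, but it requires confirming that the pointwise-optimal $v^*$ is (jointly) measurable and therefore a legitimate competitor, as noted above. An alternative route that avoids this measurability bookkeeping is to verify the variational characterization of the projection onto a convex set directly: show that the clamped $v^*$ lies in $G_\infty$ and that $\langle u-v^*,\,v-v^*\rangle\le 0$ for every $v\in G_\infty$, which again collapses to a pointwise sign check at each $t$ and $i$.
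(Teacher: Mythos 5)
Your proof is correct and follows essentially the same route as the paper's: both reduce the minimization of $\|u-v\|_2^2$ over $G_\infty$ to a pointwise (per-$t$, per-component) scalar problem whose solution is the clamp to $[-M,M]$ --- the paper does this by partitioning $[0,T]$ into the regions where $u_i(t)>M$, $|u_i(t)|\le M$, and $u_i(t)<-M$ and bounding the integral from below on each piece. Your additional checks (closedness and convexity of $G_\infty$, measurability of the clamped function, uniqueness of the projection) are points the paper's terse proof leaves implicit; they tighten the argument rather than change it.
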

\begin{proof}
  For any $v\in G_{\infty }$, we observed that
        \begin{align*}
            &\|u - v\|^2_2 = \int_0^T \|u(t) - v(t)\|_2^2dt \nonumber\\
            &= \int_0^T \sum_{i=1}^m |u_i(t) - v_i(t)|^2 dt = \sum_{i=1}^m \int_0^T |u_i(t) - v_i(t)|^2dt \nonumber\\
            &\geq  \sum_{i=1}^m \left( \int_{I_1 }|u_i(t) - v_i(t)|_2^2dt + \int_{I_2} |u_i(t) - M|_2^2 dt + \int_{I_3} |u_i(t) + M|_2^2 dt \right),
        \end{align*}
        where $I_1 := \{t\in [0, T]\:\big\vert\: \|u(t)\| \leq  M\}$, $I_2 := \{t\in [0, T]\:\big\vert\: u(t) > M\}$ and $I_3 := \{t\in [0, T]\:\big\vert\: u(t) < -M\}$. Hence, we conclude $P_{G_{\infty } u}$ as claimed. 
\end{proof}

\section{Feasible controls for Bilinear Ensembles}
\label{sec: bilinear}
In Sections \ref{sec:feasible control}-\ref{sec: ensemble control with constraints}, we have developed a convex-geometric approach to ensemble control analysis and design. In this section, we show that this approach is not restricted to linear ensemble systems and can be adopted within an iterative framework to find feasible controls for bilinear ensemble systems. More specifically, here we present an iterative method that decodes a bilinear ensemble control problem into a sequence of linear ensemble problems, so that each of which can be solved by the proposed convex-geometric projection approach. To fix ideas, we consider the bilinear ensemble system, 
\begin{align}
  \label{eq:bilinear_ensemble}
  \frac{d}{dt} X(t,\b) = A(\b)X(t,\b)+ \Big(\sum_{i=1}^m u_i(t)B_i(\b)\Big)X(t,\b),
\end{align}
where $X(t, \cdot)\in L^2(K, \mathbb{R}^n)$, $A\in L^\infty(K,\mathbb{R}^{n\times n})$, $B_i\in L^2(K,\mathbb{R}^{n\times n})$, and $u_i\in L^2([0,T],\mathbb{R})$ for $i=1,\ldots,m$, and consider a canonical problem of steering this ensemble from an initial state $X_0(\beta)$ to a target state $X_F(\b)$. Obtaining explicit forms of the steering controls $u_i(t)$ and the resulting trajectory $X(t,\b)$ is in general arduous even numerically, with the exception of only some special cases. To resolve this, our approach is based on expressing the bilinear ensemble system as an iteration equation and then solve it in an iterative manner. Specifically, we first observe that \eqref{eq:bilinear_ensemble} can be expressed in the form of a time-varying state-dependent linear ensemble system, 
$$\frac{d}{dt} X(t,\b) = A(\b)X(t,\b)+ \tilde{B}(X(t,\b))u(t),$$
where $\tilde{B}\in L^2(K,\mathbb{R}^{n\times m})$ and $u=(u_1,\ldots,u_m)'$ (see Example \ref{ex: bloch equations} in Section \ref{sec: numerical experiments}). Now, putting this into an iteration equation,
\begin{align}
    \label{eq:iteration}
    \frac{d}{dt} X^{(k+1)}(t,\b) = A(\b)X^{(k+1)}(t,\b)+ \tilde{B}(X^{(k)}(t,\b))u^{(k+1)},
\end{align}
where $k=0,1,2,\ldots$ denotes the iteration, if the state at the $k^{th}$-iteration, $X^{(k)}(t,\b)$, is known, then the system dynamics of the next iteration $X^{(k+1)}(t,\b)$ obeys \eqref{eq:iteration} with determined $A$ and $\tilde{B}$ matrices, which is in the same form as \eqref{eq:linear}. As a result, the steering control $u^{(k+1)}$ (feasible, optimal, or constrained) and the resulting trajectory $X^{(k+1)}$ associated with the specified initial state $X_0(\beta)$ and desired target state $X_F(\beta)$, can be computed using the convex-geometric approach developed in Sections \ref{sec:feasible control}-\ref{sec: ensemble control with constraints} for the linear ensemble in \eqref{eq:iteration}. Following this iterative procedure to compute the control and update the trajectory in an alternating fashion by initializing with a feasible steering control, $u^{(0)}(t)$, and the corresponding ensemble trajectory, $X^{(0)}(t, \beta)$, a control-trajectory sequence, 
\begin{align}
  \label{eq:sequence}
  (u^{(0)},X^{(0)})\to (u^{(1)},X^{(1)})\to\cdots\to (u^{(k)},X^{(k)})\to\cdots,
\end{align}
can be generated.

The analysis of convergence of this iterative procedure, i.e., of the control-trajectory sequence in \eqref{eq:sequence}, can be facilitated by a quadratic optimal control setting, that is, through considering the minimum-energy ensemble control problem,
\begin{align}
  \min & \ \ J=\frac{1}{2}\int_0^{T} (u^{(k)})^T(t)Ru^{(k)}(t)\,dt, \nonumber\\
  \label{eq:oc4}
  {\rm s.t.} & \  \frac{d}{dt}{X^{(k)}(t,\b)}=A(\b)X^{(k)}(t,\b)+\tilde{B}(X^{(k-1)}(t,\b)) u^{(k)}, \nonumber \\ 
  & \quad\ X^{(k)}(0,\b)=X_0(\b),\quad X^{(k)}(T,\b)=X_F(\b), \nonumber
\end{align}
which the regulator $R\in\mathbb{R}^{m\times m}$ is positive definite. This optimal linear ensemble control problem has been treated in our previous work \cite{Li_SICON17} using the proposed iterative method, where we showed that the sequence in \eqref{eq:sequence} is convergent if the iterated linear ensemble system in \eqref{eq:iteration} is controllable at each iteration $k$ and the control regulator $R$ is sufficiently large \cite{Li_SICON17}. We will provide several numerical examples in Section \ref{sec: numerical experiments} to illustrate this `hybrid' computational framework, integrating the convex-geometric approach into the described iterative method, to solve feasible control problems involving a bilinear ensemble system.

\section{The Computation of Iterative Projections}
\label{sec: Computation of iterative projection}
In the previous sections, we have demonstrated that the ensemble control analysis and design in the presence or absence of constraints 
can be addressed by the optimization formulation and iterative weighted projections. 
Although theoretical guarantees on the convergence of weighted projections are well-established, numerical issues for computing iterative projections are often the bottleneck toward finding the convergent solution as for the case of computing the feasible ensemble law in \eqref{equ: update rule for linear ensemble}, which require a thorough investigation and analysis.

The most common way to apply the update rule in \eqref{equ: update rule for linear ensemble} is to discretize the continuous operators $L_i$ and $L_i^*$ by finite time-steps, and then evaluate the function $u^{(k)}(t)$ by finitely many sampled values. However, when $L_iL_i^*$ is ill-conditioned, the iterations 
suffer from slow convergence over $k$. To overcome this, we derive the closed-form solution of $u^{(k)}$ and use it to directly characterize the asymptotic properties of $\{u^{(k)}\}_{k=1}^{\infty }$.

For ease of exposition, we denote 
$$Q = \sum_{i=1}^N \lambda_i L_i^*(L_i L_i^*)^{-1}L_i \quad\text{and}\quad \delta = \sum_{i=1}^N \lambda_i L_i^*(L_i L_i^*)^{-1}\xi_i,$$ 
and rewrite the update rule in \eqref{equ: update rule for linear ensemble} as
\begin{equation}\label{equ: update rule for linear ensemble2}
  u^{(k+1)} = (I_d - Q)u^{(k)} + \delta.
\end{equation}
Then, some analysis can be conducted for the projection term onto $u^{(k)}$, i.e., $(I_d-Q) u^{(k)} $.

\begin{lem}
  There exists a bounded linear operator $Q^{\infty }$ such that $Q^{\infty} = \lim_{k\to \infty}(I_d - Q)^k$.
\end{lem}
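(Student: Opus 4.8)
The plan is to recognize $Q$ as a positive, finite-rank, self-adjoint operator and to exploit the finite-rank structure to obtain operator-norm convergence of the powers $(I_d - Q)^k$. First I would observe that each summand $P_i := L_i^*(L_i L_i^*)^{-1}L_i$ appearing in $Q$ is exactly the orthogonal projection onto $\mathcal{R}(L_i^*) = (\ker L_i)^{\perp}$: a direct computation gives $P_i^* = P_i$ and $P_i^2 = P_i$, using that $(L_i L_i^*)^{-1}$ exists because $\mathcal{R}(L_i) = \mathbb{R}^n$, exactly as invoked in the proof of Theorem \ref{thm: feasible control and controllability for linear ensemble}. Consequently $Q = \sum_{i=1}^N \lambda_i P_i$ is self-adjoint, and since each $P_i$ satisfies $0 \le P_i \le I_d$ and $\sum_i \lambda_i = 1$, one gets $0 \le Q \le I_d$. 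Writing $S := I_d - Q = \sum_{i=1}^N \lambda_i (I_d - P_i)$ exhibits $S$ as a convex combination of orthogonal projections, so $S$ is self-adjoint with $0 \le S \le I_d$, and its spectrum lies in $[0,1]$.

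The decisive structural fact is that $Q$ has finite rank: each $P_i$ factors through $\mathbb{R}^n$ and hence has rank at most $n$, so $\mathrm{rank}\, Q \le Nn$. Therefore $(\ker Q)^{\perp} = \mathcal{R}(Q)$ is a closed, finite-dimensional subspace, and $L^2([0,T],\mathbb{R}^m)$ splits orthogonally as $\ker Q \oplus (\ker Q)^{\perp}$ with both summands invariant under $S$. On $\ker Q$ the operator $S$ acts as the identity, while on the finite-dimensional subspace $(\ker Q)^{\perp}$ the restriction of $Q$ is positive definite, so its nonzero eigenvalues $\mu_1, \ldots, \mu_r$ are finitely many strictly positive numbers; hence $\|S|_{(\ker Q)^{\perp}}\| = 1 - \min_j \mu_j < 1$. (Here one also identifies $\ker Q = \bigcap_{i=1}^N \ker L_i$, since $\langle Qu,u\rangle = \sum_i \lambda_i \|P_i u\|^2$ vanishes iff every $P_i u = 0$.)

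With this splitting in hand I would conclude by estimating $\|S^k - P_{\ker Q}\|$, where $P_{\ker Q}$ is the orthogonal projection onto $\ker Q$. On $\ker Q$ the difference vanishes, and on $(\ker Q)^{\perp}$ it equals $(S|_{(\ker Q)^{\perp}})^k$, whose norm is bounded by $(1 - \min_j \mu_j)^k \to 0$. Because the two subspaces reduce $S$, these combine to give $\|S^k - P_{\ker Q}\| \le (1 - \min_j \mu_j)^k$, so $Q^{\infty} := P_{\ker Q}$ is the desired bounded limit, attained in operator norm with geometric rate.

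The step I expect to be the crux is the finite-rank observation: it is precisely what forces the nonzero spectrum of $Q$ to stay bounded away from $0$ (equivalently, the spectrum of $S$ away from $1$). In a general Hilbert space a convex combination of projections can have spectrum accumulating at the endpoints, in which case $(I_d - Q)^k$ would converge only in the strong operator topology and not in norm; here the finite dimensionality of $\mathcal{R}(Q)$, inherited from $L_i : L^2([0,T],\mathbb{R}^m) \to \mathbb{R}^n$, rules this out and upgrades the convergence to operator norm.
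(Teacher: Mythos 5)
Your proof is correct, and it takes a genuinely different---and strictly stronger---route than the paper's. The paper works with the complementary projections $P_i := I_d - L_i^*(L_iL_i^*)^{-1}L_i$ (the orthogonal projections onto $\ker L_i$), notes $P_i^2=P_i$ and $P_i^*=P_i$, deduces that $I_d-Q=\sum_{i=1}^N\lambda_i P_i$ is non-expansive, and stops there, concluding only that $\|(I_d-Q)^k u\|$ stays bounded. As written, that argument does not by itself establish existence of the limit: non-expansiveness alone is insufficient (a rotation of $\mathbb{R}^2$ is even unitary, yet its powers do not converge), so the paper is implicitly leaning on the theory of averaged operators or convex combinations of projections, which in a general Hilbert space yields convergence only in the strong operator topology. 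Your argument instead exploits the structure the paper never uses, namely that each $L_i$ maps into $\mathbb{R}^n$: hence $Q$ is a self-adjoint, positive, finite-rank operator with $0\le Q\le I_d$, the space splits as $\ker Q\oplus\mathcal{R}(Q)$ with $\ker Q=\bigcap_{i=1}^N\ker L_i$, and on the finite-dimensional piece the spectrum of $I_d-Q$ is bounded away from $1$. This closes the logical gap in the paper's proof, identifies the limit explicitly as $Q^\infty=P_{\ker Q}$, and upgrades the conclusion to operator-norm convergence at a geometric rate $(1-\min_j\mu_j)^k$---which is in fact the mode of convergence implicitly needed for the closed-form computation in \eqref{equ: closed form of Qu} and its finite-dimensional evaluation. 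What each approach buys: the paper's non-expansiveness observation is shorter and suffices for the qualitative claims it reuses later, while yours is the complete argument, and your closing remark is exactly the right diagnosis---without finite rank, the spectrum of $Q$ could accumulate at $0$ and norm convergence of $(I_d-Q)^k$ would genuinely fail.
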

\begin{proof}
  It suffices to show that the operator $I_d - Q$ is non-expansive. Let $P_i := I_d - L_i^*(L_iL_i^*)^{-1}L_i$. We observe that $P_i^2 = P_i$ and $P_i^* = P_i$. It follows that for any $u\in L^2([0, T], \mathbb{R}^m)$, we have 
$$\|P_i u\|^2 = \langle P_iu, P_iu \rangle = \langle u, P_i^*P_iu \rangle = \langle u, P_iu \rangle \leq \|P_iu\| \|u\|,$$ 
which implies that $\|P_iu\| \leq \|u\|$ for all $i = 1, \ldots,N$. As a result, 
$I_d-Q$ is non-expansive since 
  \begin{align*}
    \|(I_d - Q)u\| &= \|\sum_{i=1}^N \lambda_i P_i u\| \leq \sum_{i=1}^N \lambda_i \|P_i u\|\leq \sum_{i=1}^N \lambda_i \|u\| = \|u\|.
  \end{align*}
  Consequently, $\|\lim_{k\to \infty }(I_d - Q) u\|$ is bounded for any $u\in L^2([0, T], \mathbb{R}^m)$, which concludes the proof. 
\end{proof}

Now, we may employ the well-defined $Q^{\infty }$ to compute the closed-form solution of $\lim_{k\to \infty } u^{(k)}$. Specifically, by multiplying $Q$ and taking limits on both sides of \eqref{equ: update rule for linear ensemble2}, it yields the closed-form representation of $\lim_{k\to \infty }Qu^{(k)}$, that is,
\begin{align}
  \lim_{k\to \infty }Qu^{(k)} &= \lim_{k\to \infty }Q( (I_d - Q)u^{(k)} + \delta ) \nonumber\\
  & = \lim_{k\to \infty }(I_d - Q) (Qu^{(k-1)} - \delta) + \delta \nonumber\\
  & = \lim_{k\to \infty }(I_d - Q)[(I_d - Q)(Qu^{(k-2)} - \delta) + \delta -\delta] + \delta\nonumber\\
  & = \lim_{k\to \infty }(I_d - Q)^2 (Qu^{(k-2)} - \delta) + \delta \nonumber\\
  & \:\:\vdots\nonumber\\
  & = \lim_{k\to \infty } (I_d - Q)^k (Qu^{(0)} - \delta) + \delta\nonumber\\
  &= Q^{\infty }(Qu^{(0)} - \delta) + \delta. 
  \label{equ: closed form of Qu}
\end{align}

When computing $\lim_{k\to \infty }u^{(k)}\doteq u^*$ numerically, $Q$ can be approximated by a finite-dimensional matrix $W = \left[ Q(v_1), \ldots Q(v_r) \right]$, where $V=\left\{v_1, \ldots , v_r\right\}$ is a 
truncated basis of $L^2([0, T], \mathbb{R}^m)$. Since $W$ is finite-dimensional, it always admits a Moore-Pseudo inverse, denoted $W^\dagger$. Then, the coordinate of $u^*$ under the basis $V$, denoted as $\mu^*\in\mathbb{R}^r$, 
can be computed by
\begin{align}
  \mu^* &= W^{\dagger} (W^{\infty } (W \mu^{(0)} - \delta) + \delta)= W^{\infty }(\mu^{(0)} - W^{\dagger} \delta) + W^{\dagger}\delta, \label{equ: closed form solution of weighted projection}
\end{align}
where 
$\mu^{(0)}\in\mathbb{R}^r$ is the coordinate of the initial control law $u^{(0)}$ and $W^{\infty } := \lim_{k\to \infty}(I_d - W)^{k}$. 

In addition, from the eigen-decomposition of $I_d - W = P^{-1}\Lambda P$, it is clear to observe that $\lim_{k\to \infty }\Lambda^k$ can be computed by simply eliminating the eigenvalues of $\Lambda$ that are not equal to $1$. In this way, $W^{\infty } = P^{-1} \left(\lim_{k\to \infty }  \Lambda^k\right) P$ can be directly evaluated, so is $\mu^*$ in \eqref{equ: closed form solution of weighted projection} without any iterative computations.

\section{Numerical experiments}
\label{sec: numerical experiments}
In this section, we provide various numerical experiments to illustrate the application of the developed convex-geometric approach to compute ensemble controls. We demonstrate the design of the fixed-endpoint and the pattern formation controls of unconstrained linear ensemble systems. We further display examples on the fixed-endpoint control of linear ensemble systems with energy or amplitude constraints on control inputs. Finally we present results for the fixed-endpoint control of bilinear systems. All the numerical experiments are implemented in Matlab R2017b on a single workstation with Xeon Gold 6144 3.5GHz processor and 192GB memory.

\subsection{Linear Ensemble Systems}
\label{sec:linear_ex}
In this section, we consider several examples of the control of linear ensembles with or without constraints on the control inputs and use them to illustrate the applicability of the proposed convex-geometric approach.
\subsubsection{Unconstrained linear ensemble control}
Consider an ensemble of forced harmonic oscillators, with their frequencies $\w_i$'s ranging in the interval $[\w_a,\w_b]$, modeled by
\begin{align}\label{equ: harmonic oscillator}
    \frac{d}{dt} \begin{bmatrix}
        x_1(t, \omega_i) \\ x_2(t, \omega_i)
    \end{bmatrix} = \begin{bmatrix}
        0 & -\omega_i \\ \omega_i & 0
    \end{bmatrix} \begin{bmatrix}
        x_1(t, \omega_i) \\ x_2(t, \omega_i)
    \end{bmatrix} + \begin{bmatrix}
        u_1(t) \\ u_2(t)
    \end{bmatrix}.
\end{align}
The transition matrix of this system is $\Phi(t, \sigma, \omega_i) = e^{A_{i}(t- \sigma)}$, where $A_{i}= \begin{bmatrix}
  0 & -\omega_i \\ \omega_i & 0
\end{bmatrix}$.
Then, $L_i$ and $\xi_i$ in \eqref{eq:Li} and in \eqref{eq:xi_i}, respectively, can be computed for a given control parameter and a pair of initial and target states.

\begin{ex}[Fixed-endpoint control]\label{ex: harmonic oscillator fixed endpoint}
We consider steering an ensemble of $21$ systems in \eqref{equ: harmonic oscillator} from the same initial state $(1,0)'$ to the same target state $(0,1)'$ at time $T=1$, where $\w_i$'s are uniformly sampled in $[-1,1]$. 

Figure \ref{fig: fixed end-point control} shows a feasible ensemble control signal $(u_1^*(t),u_2^*(t))$ designed by the method of iterative weighted projections, which achieves the desired transfer. In this case, the initial control functions are $u_1^{(0)} (t) \equiv u_2^{(0)}(t) \equiv 1$ and $(u_1^*(t),u_2^*(t))$ are obtained based on the update rule in \eqref{equ: update rule for linear ensemble} after $1\times 10^5$ iterations. Figure \ref{fig: fixed end-point error} presents the terminal errors with respect to different numbers of iterations, in which we show that more iterations result in smaller terminal errors. 

\begin{figure}[ht]
  \centering
  \adjustbox{minipage=1em,valign=t}{\subcaption{}\label{fig: fixed end-point control}}%
  \begin{subfigure}[b]{0.45\linewidth}
    \centering\includegraphics[width=0.6\linewidth]{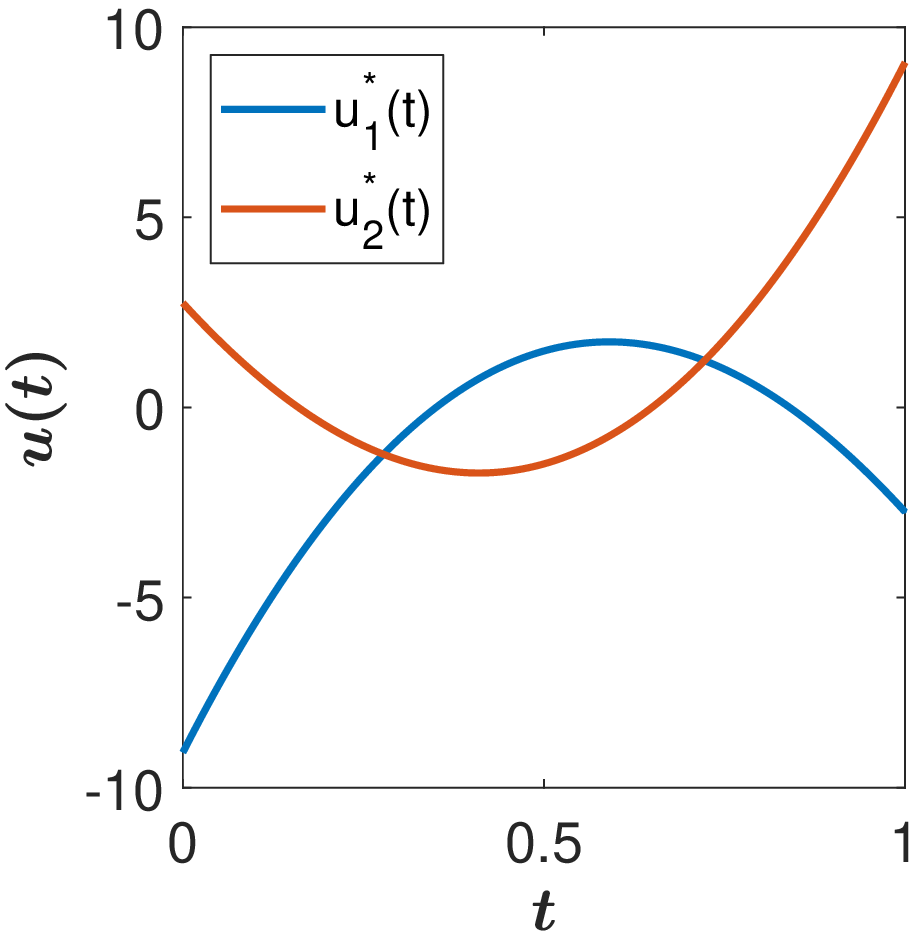}
  \end{subfigure}
  \hspace{5pt}
  \adjustbox{minipage=1em,valign=t}{\subcaption{}\label{fig: fixed end-point error}}%
  \begin{subfigure}[b]{0.45\linewidth}
    \centering\includegraphics[width=0.6\linewidth]{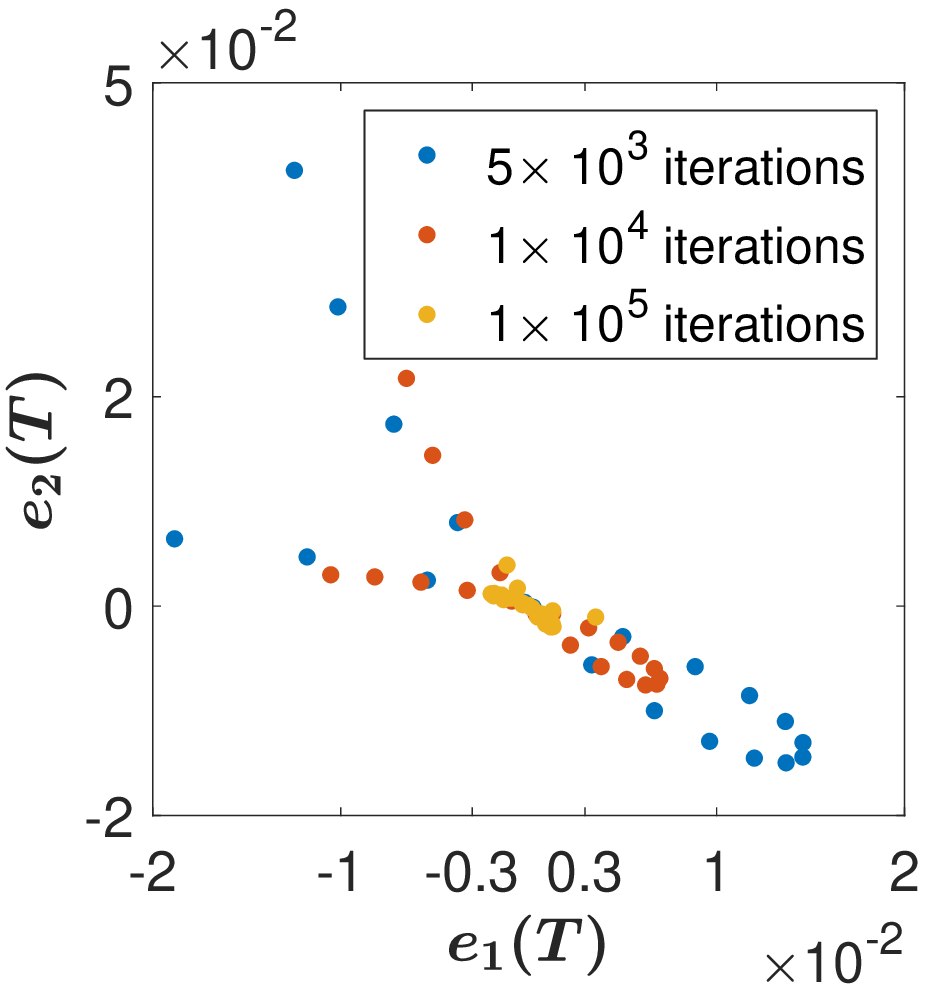} 
  \end{subfigure}
  \caption{Feasible control of the ensemble of 21 harmonic oscillators in \eqref{equ: harmonic oscillator}. (a) The feasible control law $(u_1^*(t), u_2^*(t))$ generated based on the update rule in \eqref{equ: update rule for linear ensemble} after $1\times 10^5$ iterations. (b) The terminal errors $(e_1(T), e_2(T))'$ of each harmonic oscillator under the feasible control laws obtained by applying the update rule in \eqref{equ: update rule for linear ensemble} for different numbers of iterations, where $(e_1(T), e_2(T))' = (x_1(T), x_2(T)-1)'$.}
\end{figure}

Figure \ref{fig: fixed end-point optimal control} reports the minimum-energy control law $(u_1^*(t),u_2^*(t))$ of Example \ref{ex: harmonic oscillator fixed endpoint} designed by the closed-form solution in \eqref{equ: closed form solution of weighted projection}. In this example, functions in $L^2([0, T], \mathbb{R}^2)$ are approximated using Legendre polynomials of order $r = 50$. The initial condition in \eqref{equ: closed form solution of weighted projection} is $\mu^{(0)} = 0$. Figure \ref{fig: fixed end-point optimal error} shows the terminal errors of each harmonic oscillator in the ensemble following the minimum-energy control law.

\begin{figure}[ht]
  \centering
  \adjustbox{minipage=1em,valign=t}{\subcaption{}\label{fig: fixed end-point optimal control}}%
  \begin{subfigure}[b]{0.45\linewidth}
    \centering\includegraphics[width=0.6\linewidth]{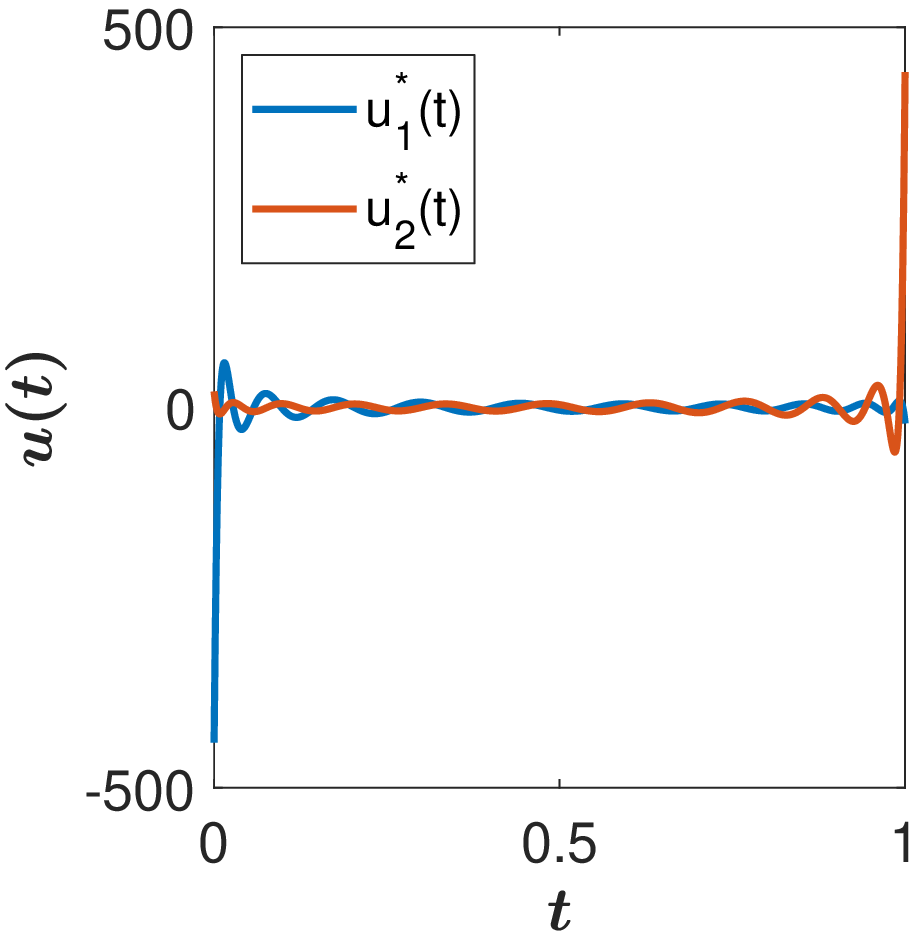}
  \end{subfigure}
  \hspace{5pt}
  \adjustbox{minipage=1em,valign=t}{\subcaption{}\label{fig: fixed end-point optimal error}}%
  \begin{subfigure}[b]{0.45\linewidth}
    \centering\includegraphics[width=0.6\linewidth]{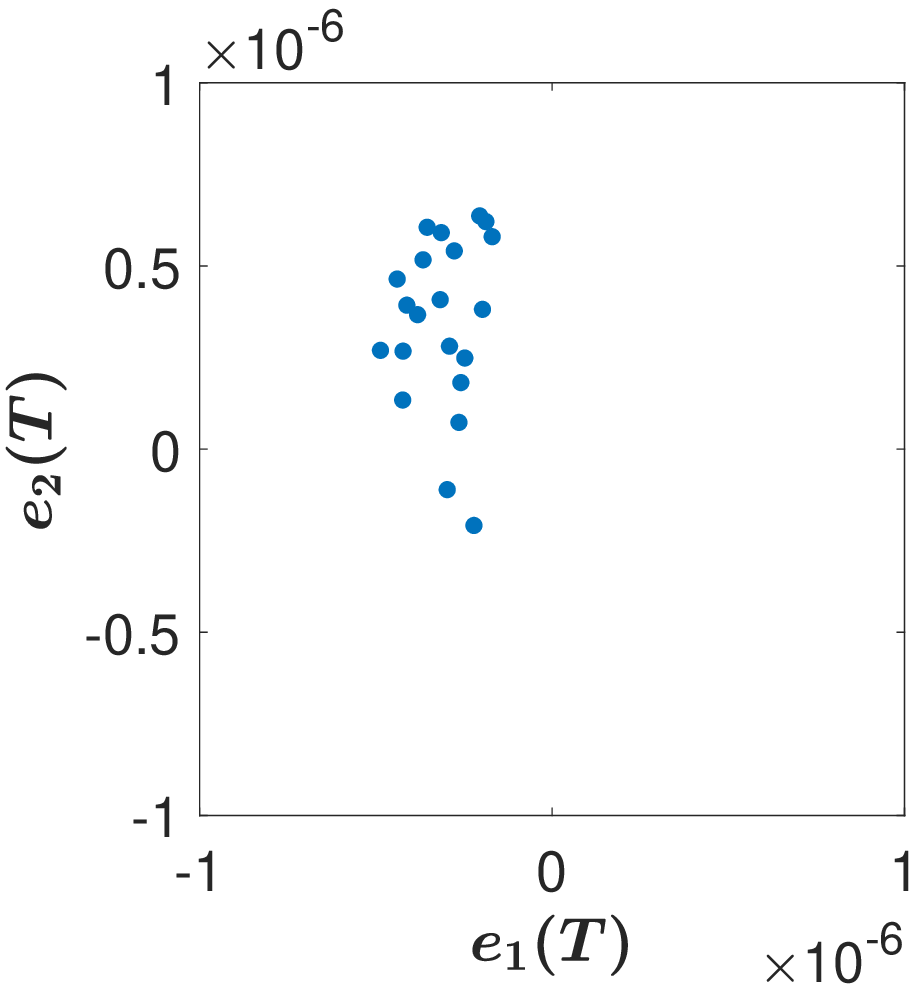} 
  \end{subfigure}
  \caption{Minimum-energy control of the ensemble of 21 harmonic oscillators in \eqref{equ: harmonic oscillator}. (a) The minimum-energy control law $(u_1^*(t), u_2^*(t))$ designed by the closed-form solution in \eqref{equ: closed form solution of weighted projection}. (b) The terminal errors $(e_1(T), e_2(T))'$ of each harmonic oscillator in the ensemble following the minimum-energy control, where $(e_1(T), e_2(T))' = (x_1(T), x_2(T)-1)'$.}
\end{figure}
\end{ex}

\begin{ex}[Pattern formation]\label{ex: shape control}
    In this example, we consider a more challenging task to drive an ensemble of $50$ harmonic oscillators in \eqref{equ: harmonic oscillator} between two configurations, where the initial state $X_0(\w_i)=(x_{10}(\w_i),x_{20}(\w_i))'$ and target state $X_F(\w_i)=(x_{1F}(\w_i),x_{2F}(\w_i))'$ at $T=40$, are arrangements of $50$ oscillators in star- and maple-shaped images in the plane as shown in Figure \ref{fig: shape_control_ini} and Figure \ref{fig: shape_control_final}, respectively, and $\omega_i$'s are uniformly sampled in $[-10, 10]$.

    Figure \ref{fig: shape_control_control} displays the minimum-energy control law $(u_1^*(t),u_2^*(t))$ designed by the closed-form solution in \eqref{equ: closed form solution of weighted projection}, which achieves the desired pattern formation. In this example, functions in $L^2([0, T], \mathbb{R}^2)$ are approximated using Legendre polynomials of order $r = 200$, and the initial condition in \eqref{equ: closed form solution of weighted projection} is $\mu^{(0)} = 0$.

\begin{figure}[ht]
  \centering
  \adjustbox{minipage=1em,valign=t}{\subcaption{}\label{fig: shape_control_ini}}%
  \begin{subfigure}[b]{0.43\linewidth}
    \centering\includegraphics[width=0.6\linewidth]{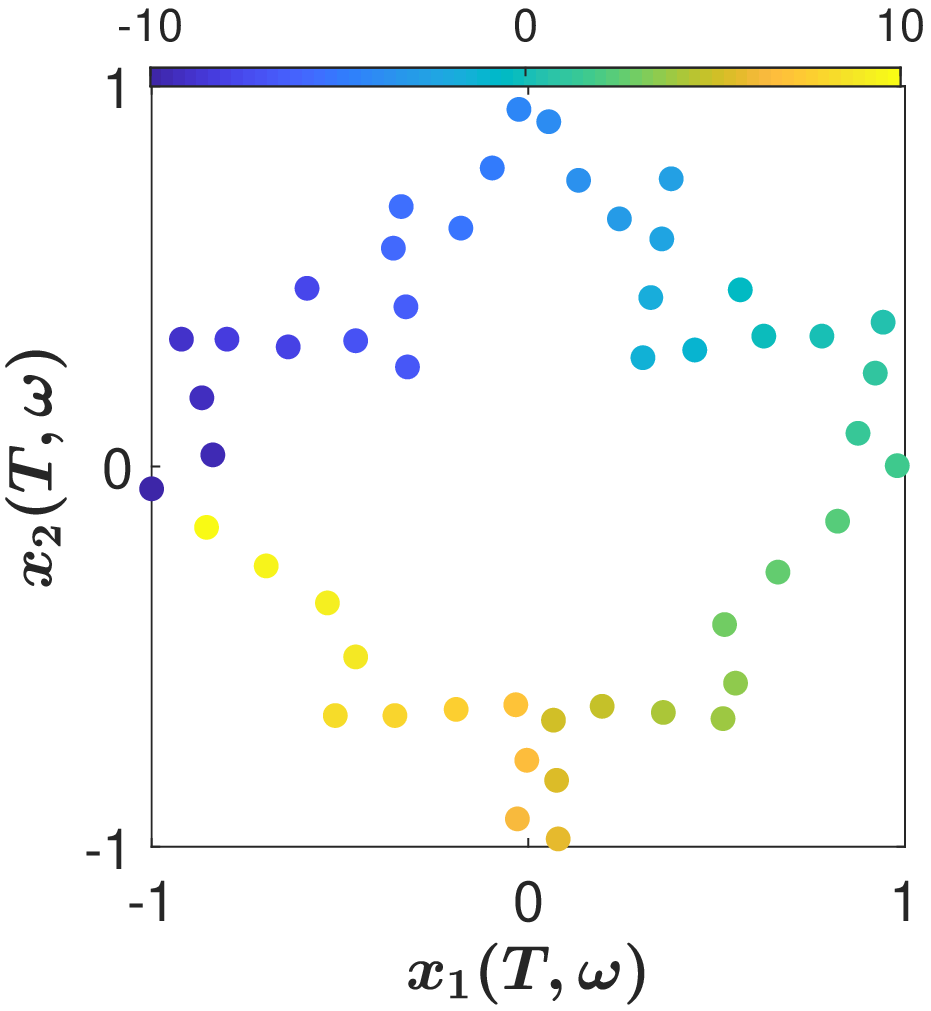}
  \end{subfigure}
  \hspace{20pt}
  \adjustbox{minipage=1em,valign=t}{\subcaption{}\label{fig: shape_control_final}}%
  \begin{subfigure}[b]{0.43\linewidth}
    \centering\includegraphics[width=0.6\linewidth]{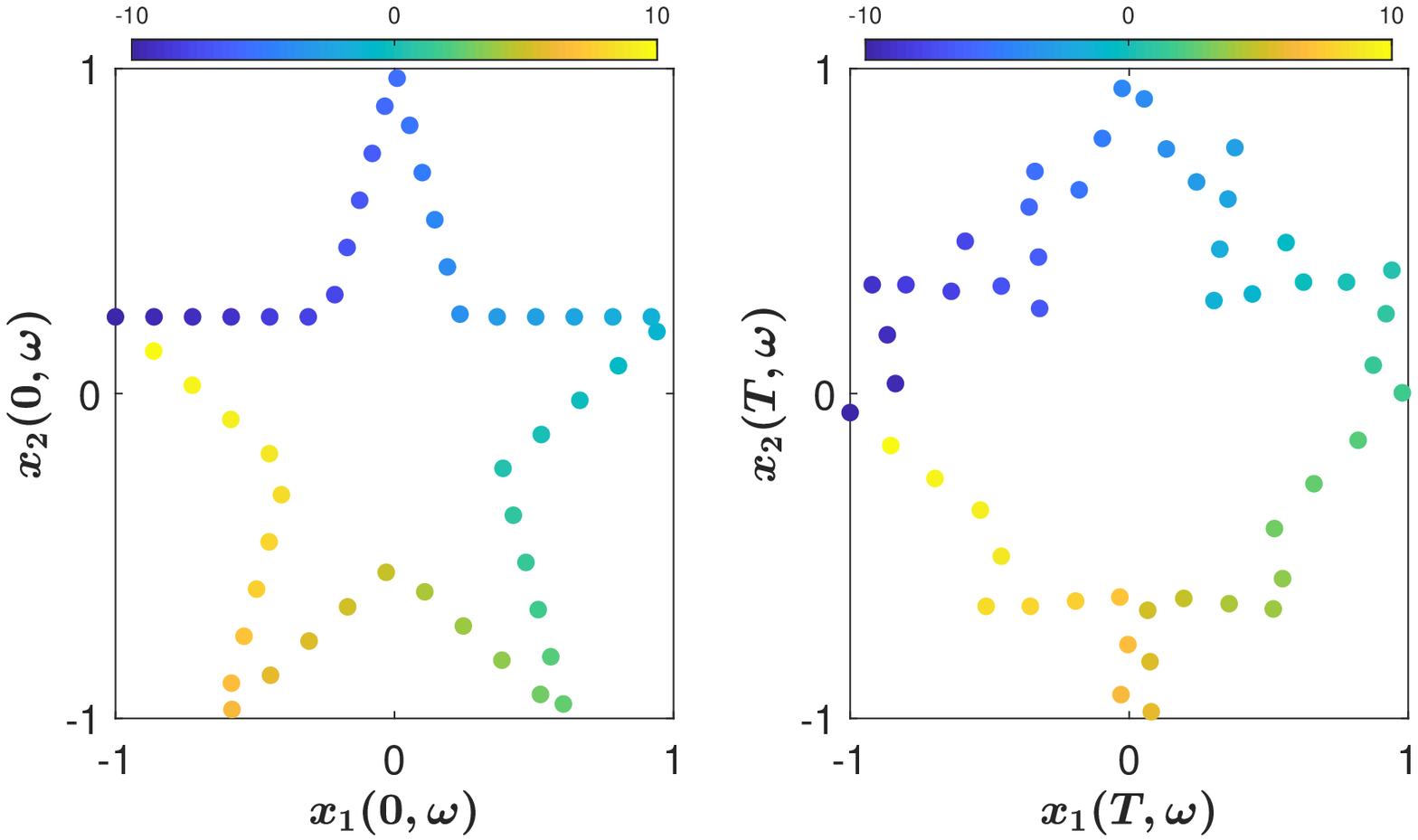}
  \end{subfigure}

  \adjustbox{minipage=1em,valign=t}{\subcaption{}\label{fig: shape_control_control}}%
  \begin{subfigure}[b]{1\linewidth}
    \centering\includegraphics[width=0.6\linewidth]{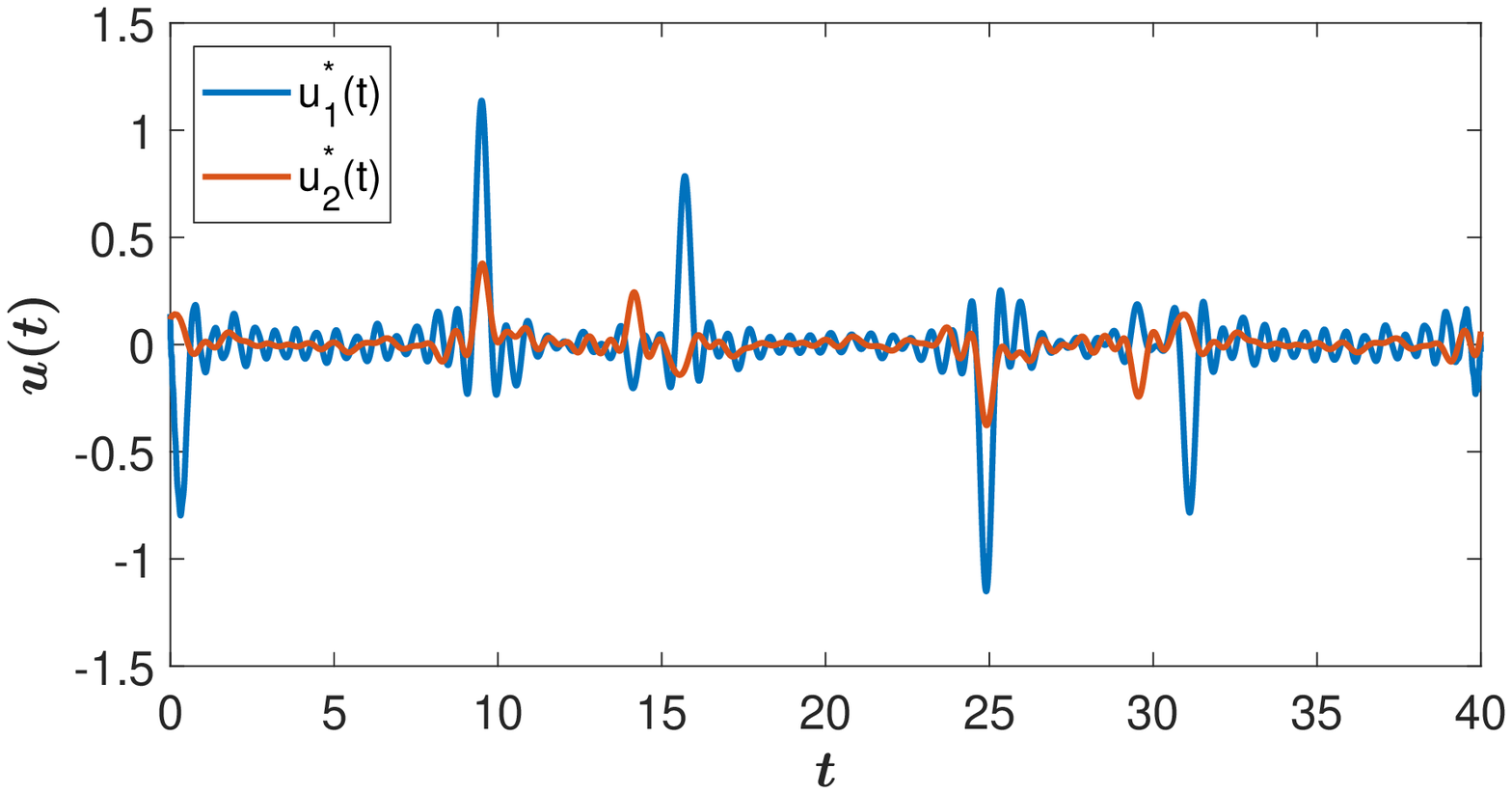} 
  \end{subfigure}
  \caption{Minimum-energy pattern formation of the ensemble of 50 harmonic oscillators in \eqref{equ: harmonic oscillator}. (a) Initial state $X_0(\w_i)$ of the ensemble. (b) Final state $(x_1(T, \w_i), x_2(T, \w_i))'$ of the ensemble, which coincide with the target state $X_F(\w_i)$. (c) The minimum-energy control that accomplishes the transfer between shapes.}
    \label{fig: shape_control}
\end{figure}
\end{ex}

\begin{ex}[Pattern formation in an uncontrollable ensemble]\label{ex: Shape control for uncontrollable ensemble}
  
    Here, we consider the same pattern formation problem as presented in Example \ref{ex: shape control}, while using only one control input $u(t)$, i.e.,
  \begin{align}
      \label{equ: harmonic oscillator uncontrollable}
      \frac{d}{dt} \begin{bmatrix}
          x_1(t, \omega_i) \\ x_2(t, \omega_i)
      \end{bmatrix} = \begin{bmatrix}
          0 & -\omega_i \\ \omega_i & 0
      \end{bmatrix} \begin{bmatrix}
          x_1(t, \omega_i) \\ x_2(t, \omega_i)
      \end{bmatrix} + \begin{bmatrix}
          1 \\ 0
      \end{bmatrix}u(t).
  \end{align}
  It was shown in \cite{li2016ensemble} that the ensemble system of the form
  \begin{equation*}
    \left\{\:\:
    \begin{aligned}
        &\frac{d}{dt} X(t, \beta) = \beta A X(t, \beta) + Bu(t),\\
        & \beta \in [-\beta_1, \beta_2], \quad \beta_1, \beta_2 >0,
    \end{aligned}\right.
  \end{equation*}
  with $A\in\mathbb{R}^{n\times n}$ and $B\in\mathbb{R}^{n\times m}$ is ensemble controllable if and only if $\text{rank}\:(A) = \text{rank}\:(B) = n$. As a result, the system in \eqref{equ: harmonic oscillator uncontrollable} is ensemble uncontrollable, i.e., the ensemble cannot be steered between an arbitrary pair of ensemble states, since the $B$ matrix is not full rank.

  This theoretical result indicates that the desired pattern formation from a star to a maple-shaped state may be unachievable. We will verify this by using the developed convex-geometric projection method. Given the desired pair of formation shown in Figures \ref{fig: shape_control_ini} and \ref{fig: shape_control_final}, we computed a feasible ensemble control law displayed in Figure \ref{fig: star_maple_control_uncontrollable}, using the closed-form solution in \eqref{equ: closed form solution of weighted projection}, for which the choices of the Legendre polynomials and the initial condition are the same as in Example \ref{ex: shape control}.

  The resulting final state following this ensemble control signal is shown in Figure \ref{fig: shape_control_final_uncontrollable}. Since it appears not to be of the desired shape of a maple leaf, by Theorem \ref{thm: feasible control and controllability for linear ensemble}, this final state is not ensemble reachable from the star-shaped state as shown in Figure \ref{fig: shape_control_ini}. This further implies that the system in \eqref{equ: harmonic oscillator uncontrollable} is ensemble uncontrollable.

\begin{figure}[ht]
  \centering
  \adjustbox{minipage=1em,valign=t}{\subcaption{}\label{fig: star_maple_control_uncontrollable}}%
  \begin{subfigure}[b]{0.43\linewidth}
    \centering\includegraphics[width = 0.6\linewidth]{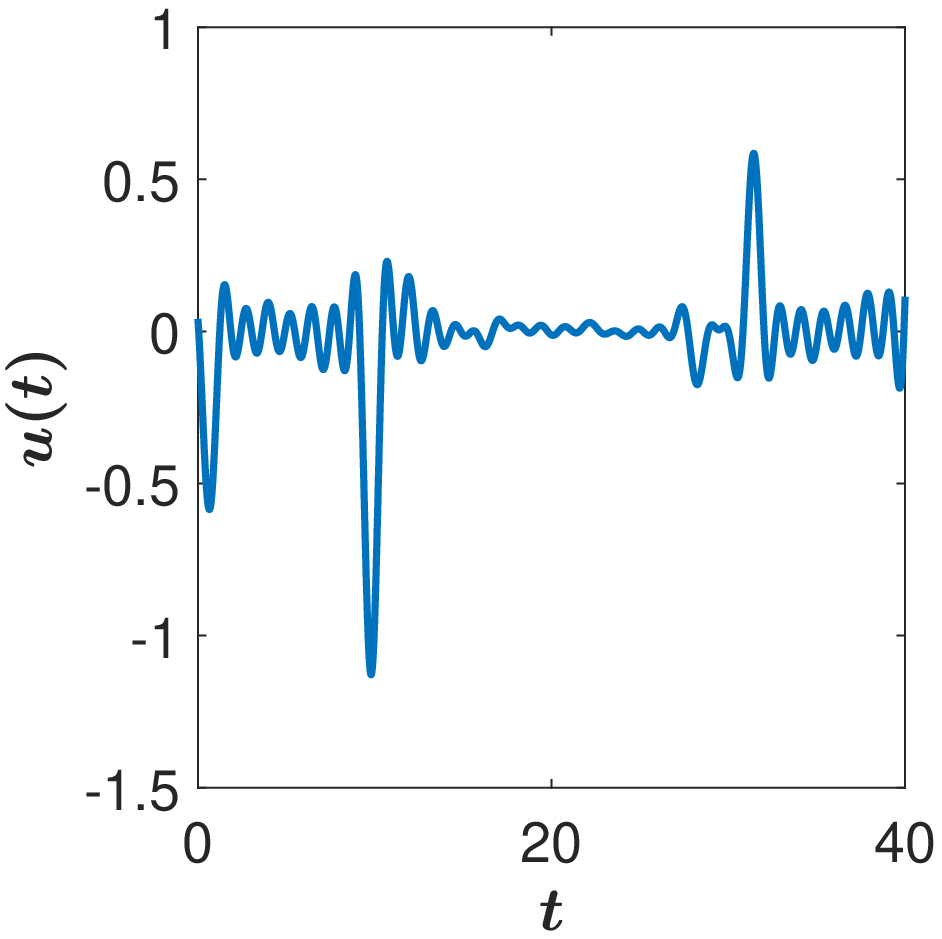} 
  \end{subfigure}
  \adjustbox{minipage=1em,valign=t}{\subcaption{}\label{fig: shape_control_final_uncontrollable}}%
  \begin{subfigure}[b]{0.43\linewidth}
    \centering\includegraphics[width=0.6\linewidth]{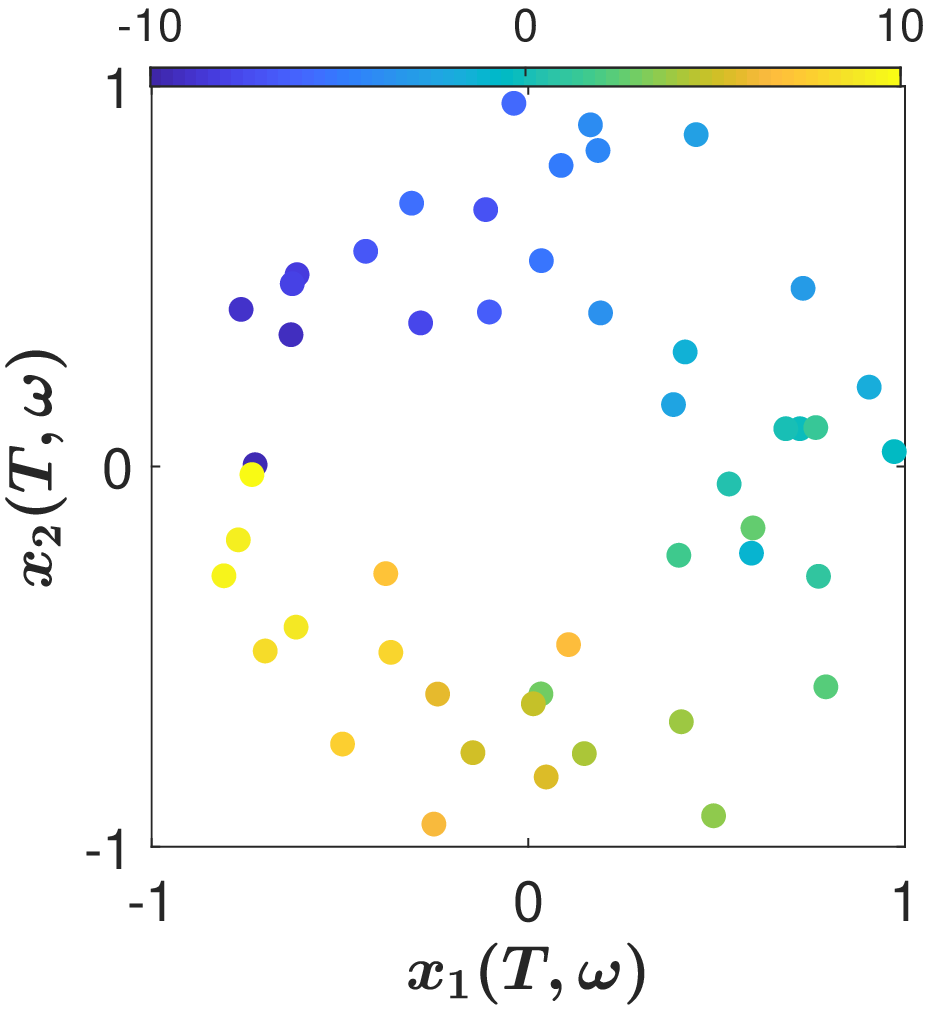}
  \end{subfigure}
  \caption{Results of the pattern formation for an uncontrollable ensemble. (a) The control law designed by the closed-form solution in \eqref{equ: closed form solution of weighted projection}. (b) Final state $(x_1(T, \w_i), x_2(T, \w_i))'$ of the ensemble. Since the final state is not identical to the target state, the desired maple-shaped configuration is not ensemble reachable.}
\end{figure}
\end{ex}

\subsubsection{Constrained linear ensemble control}
\label{sec:linear_example_constrained}
In the following two examples, we consider steering an ensemble of $21$ harmonic oscillators in \eqref{equ: harmonic oscillator} from the same initial state $(1, 0)'$ to the same target state $(0, 1)'$ at $T = 1$ with limited control energy or power, where $\omega_i$'s are uniformly sampled  in $[-10, 10]$.

\begin{ex}[Fixed-endpoint control with energy constraints]\label{ex: energy constrained control}
In this example, we consider energy constraints on the control inputs, given by 
$$G = \{(u_1, u_2)' \in L^2([0, T], \mathbb{R}^2) \:\big\vert\: \|u_1\|_2 \leq  M, \|u_2\|_2 \leq M\}.$$
\end{ex}

\begin{ex}[Fixed-endpoint control with amplitude constraints]\label{ex: amplitude constrained control}
In this example, we consider power constraints on the control input, given by 
$$G = \{(u_1,u_2)' \in L^2([0, T], \mathbb{R}^2) \:\big\vert\: \underset{t\in [0, T]}{\mathrm{max}}\{|u_1(t)|, |u_2(t)|\} \leq  M\}.$$
\end{ex}

Figures \ref{fig:C2_control_err} and \ref{fig:Cinf_control_err} display the results of Examples \ref{ex: energy constrained control} and \ref{ex: amplitude constrained control}, respectively, when $M = 5, 10, 25, 50$. In each case, the constrained control law is computed by applying the update rule in \eqref{equ: update rule for linear ensemble constrained} after $1\times 10^4$ iterations, where the initial control functions are $u_{1}^{(0)}(t) \equiv u_{2}^{(0)}(t) \equiv 0$. As we shall see from these figures, more relaxed constraints on the control inputs lead to smaller terminal errors.

\begin{figure*}[!t]
\centering
\includegraphics[width=6.5in]{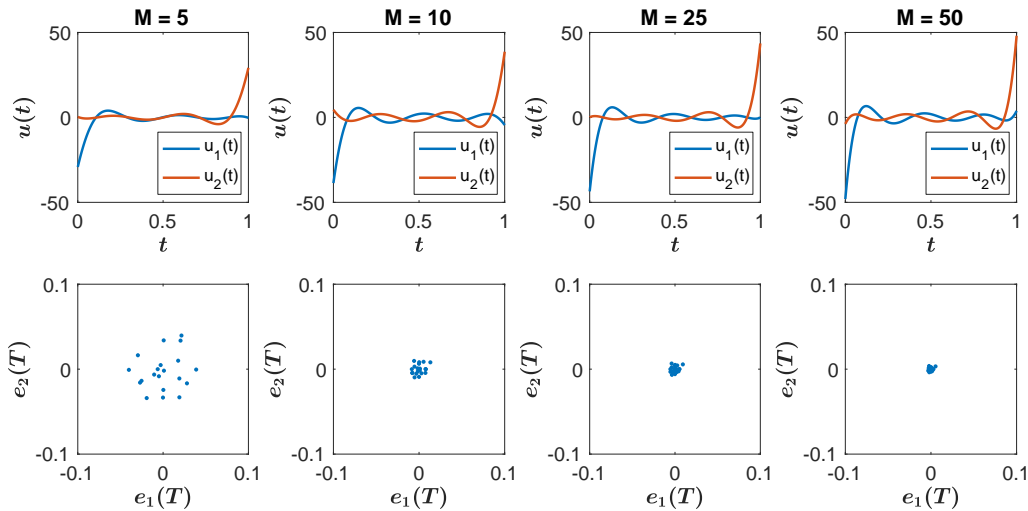}
\caption{Fixed-endpoint control with the constraints $\|u_1(t)\|_2\leq M$ and $\|u_2(t)\|_2 \leq M$ for $M = 5, 10, 25, 50$. The control law for each case is computed by running the update rule in \eqref{equ: update rule for linear ensemble constrained} for $1\times 10^4$ iterations, and $(e_1(T), e_2(T))'$ represents the terminal error computed by $(e_1(T), e_2(T))' = (x_1(T), x_2(T)-1)'$.}
\label{fig:C2_control_err}
\end{figure*}
\begin{figure*}[!t]
\centering
\includegraphics[width=6.5in]{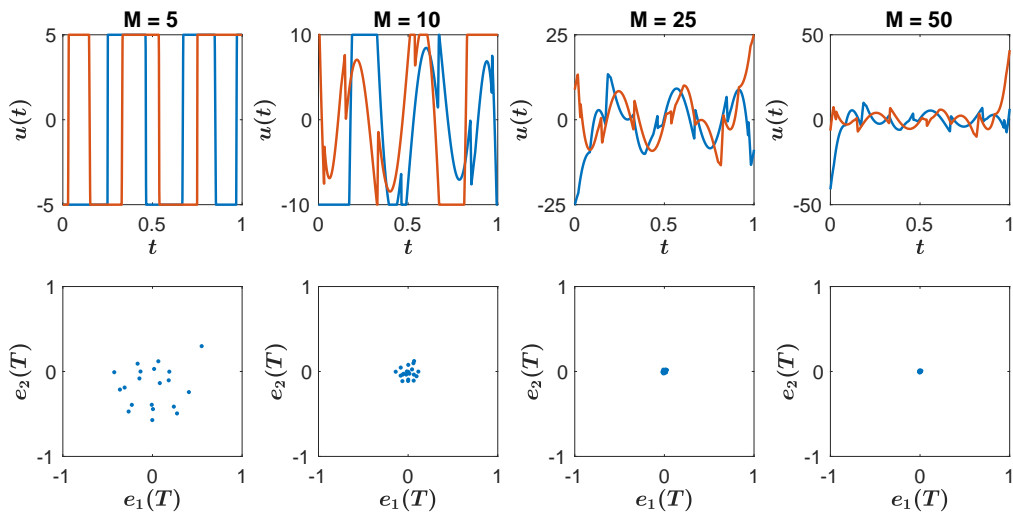}
\caption{Fixed-endpoint control with the constraints $\|u_1(t)\|_\infty \leq M$ and $\|u_2(t)\|_\infty  \leq M$ for $M = 5, 10, 25, 50$. The control law for each case is computed by running the update rule in \eqref{equ: update rule for linear ensemble constrained} for $1\times 10^4$ iterations, and $(e_1(T), e_2(T))'$ represents the terminal error computed by $(e_1(T), e_2(T))' = (x_1(T), x_2(T)-1)'$.}
\label{fig:Cinf_control_err}
\end{figure*}

\subsection{Bilinear Ensemble Systems}
\label{sec:bilinear_example}
In this section, we illustrate the application of the developed convex-geometric method to find feasible controls for bilinear ensemble systems as presented in Section \ref{sec: bilinear}. Our showcase is the control of a sample of nuclear spins modeled by the Bloch equations that form a bilinear ensemble system \cite{li2009ensemble}.

\begin{ex}[Broadband Pulse Design]\label{ex: bloch equations}
    The evolution of an ensemble of nuclear spins obeys the Bloch equations, given by
    \begin{align}
    \label{eq:Bloch}
      \frac{d}{dt} \begin{bmatrix}
      x(t, \omega_i) \\ y(t, \omega_i) \\ z(t, \omega_i)
      \end{bmatrix} = \begin{bmatrix}
      0 & -\omega_i & u(t) \\ \omega_i & 0 & -v(t) \\ -u(t) & v(t) & 0
      \end{bmatrix} \begin{bmatrix}
      x(t, \omega_i) \\ y(t, \omega_i) \\ z(t, \omega_i).
      \end{bmatrix},
    \end{align}
    where $\w_i\in [-1,1]$ denotes the Larmor frequency of the $i^{\text{th}}$ spin, $i=1,\ldots,41$, and $u(t)$ and $v(t)$ are radio-frequency fields (controls) applied to the $y$- and the $x$-axis, respectively. Here, we consider the design of a broadband inversion pulse \cite{li2011optimal} that uniformly inverts the ensemble in \eqref{eq:Bloch} from the identical initial state $(0,0,-1)'$ to the identical target state $(0, 0, 1)'$ at time $T = 1$, where $\omega_i$'s are uniformly sampled in $[-1, 1]$.
\end{ex}

To apply the integrated method involving the convex-geometric projection and the iterative procedure presented in Section \ref{sec: bilinear} to design a feasible control for the Bloch ensemble in \eqref{eq:Bloch}, we first rewrite its dynamics into a time-varying state-dependent linear ensemble form, that is,
\begin{align*}
    \quad\frac{d}{dt} \begin{bmatrix}
    x(t, \omega_i) \\ y(t, \omega_i) \\ z(t, \omega_i)
    \end{bmatrix} 
    &= \begin{bmatrix}
    0 & -\omega_i & 0 \\ \omega_i & 0 & 0 \\ 0 & 0 & 0
    \end{bmatrix} \begin{bmatrix}
    x(t, \omega_i) \\ y(t, \omega_i) \\ z(t, \omega_i)
    \end{bmatrix} + \begin{bmatrix}
     z(t, \omega_i) & 0 \\ 0 & -z(t, \omega_i)  \\ -x(t, \omega_i) & y(t, \omega_i) 
    \end{bmatrix} \begin{bmatrix}
    u(t)\\ v(t)
    \end{bmatrix}, \\
    &\doteq A(\omega_i) X(t, \omega_i) + \tilde{B}(X(t, \omega_i))U(t).
\end{align*}
Then, we consider the iteration equation, as introduced in \eqref{eq:iteration},
\begin{align}
    \frac{d}{dt}X^{(k+1)}(t, \omega_i)= & A(\omega_i)X^{(k+1)}(t, \omega_i) + \tilde{B}(X^{(k)}(t, \omega_i))U^{(k+1)}(t),
    \label{eq:iteration_ho}
\end{align}
where $k=0,1,2,\ldots$ denotes the iteration. Following the iterative method described in Section \ref{sec: bilinear} with the initial trajectory $X^{(0)}(t, \beta)$ generated by the initial control law $U^{(0)}(t) \equiv 0$, a convergent control-trajectory sequence, with the stopping criterion, $\mathrm{max}_{\omega_i} \|X(T, \omega_i) - X_F(\omega_i)\|_2 < 5\times 10^{-2}$, 
is generated after $300$ iterations, i.e., $k=0,1,\ldots,300$ in \eqref{eq:iteration_ho} and 
$$\{(U^{(k)}(t),X^{(k)}(t,\w_i))\}\to (U^{*}(t),X^{*}(t,\w_i)),$$ 
where $U^*(t)=(u^*(t), v^*(t))'$ is the convergent feasible ensemble control law displayed in Figure \ref{fig: bloch_control}. In each control-trajectory iteration, the minimum-energy ensemble control law for the time-varying linear ensemble in \eqref{eq:iteration_ho} is computed by applying the update rule in \eqref{equ: update rule for linear ensemble} for $1,000$ times, with the initial control functions $u^{(0)}(t) \equiv v^{(0)}(t) \equiv 0$. The resulting terminal errors and the trajectories for each Bloch equation following $(u^*(t), v^*(t))'$ are shown in Figure \ref{fig: bloch_error} and \ref{fig: bloch_traj}, respectively.

\begin{figure}[ht]
  \centering
  \adjustbox{minipage=1em,valign=t}{\subcaption{}\label{fig: bloch_control}}%
  \begin{subfigure}[b]{0.43\linewidth}
    \centering\includegraphics[width=0.6\linewidth]{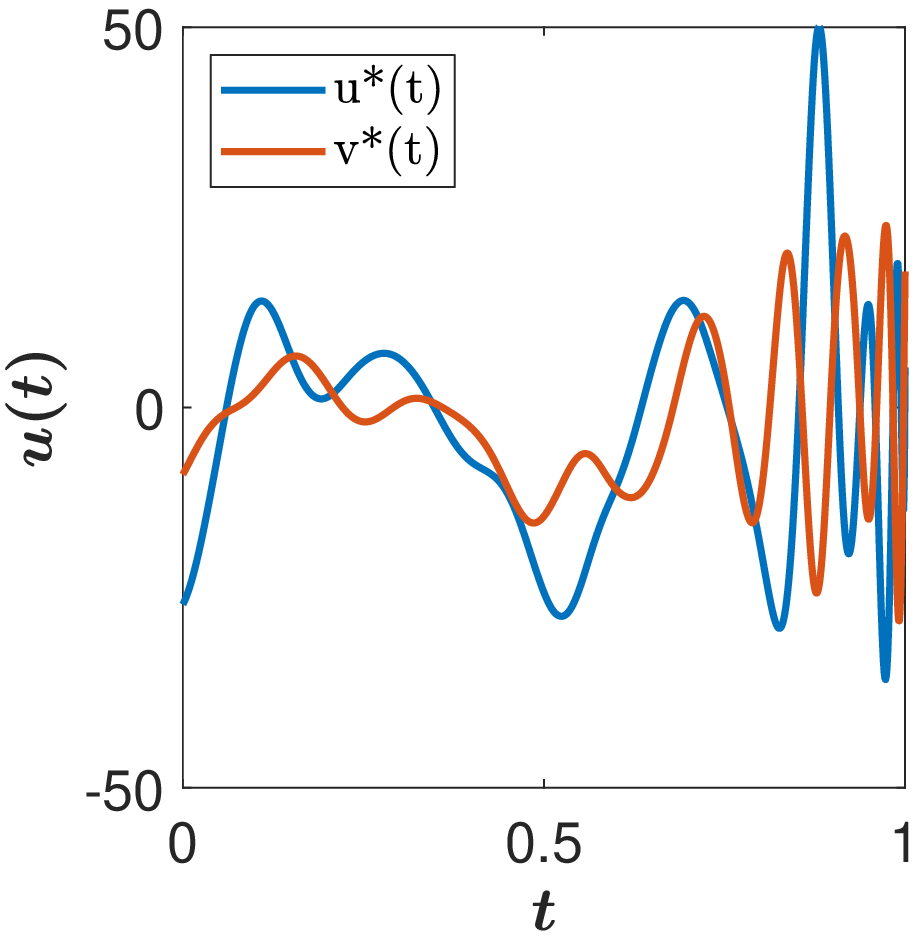}
  \end{subfigure}
  \hspace{0pt}
  \adjustbox{minipage=1em,valign=t}{\subcaption{}\label{fig: bloch_error}}%
  \begin{subfigure}[b]{0.43\linewidth}
    \centering\includegraphics[width=0.6\linewidth]{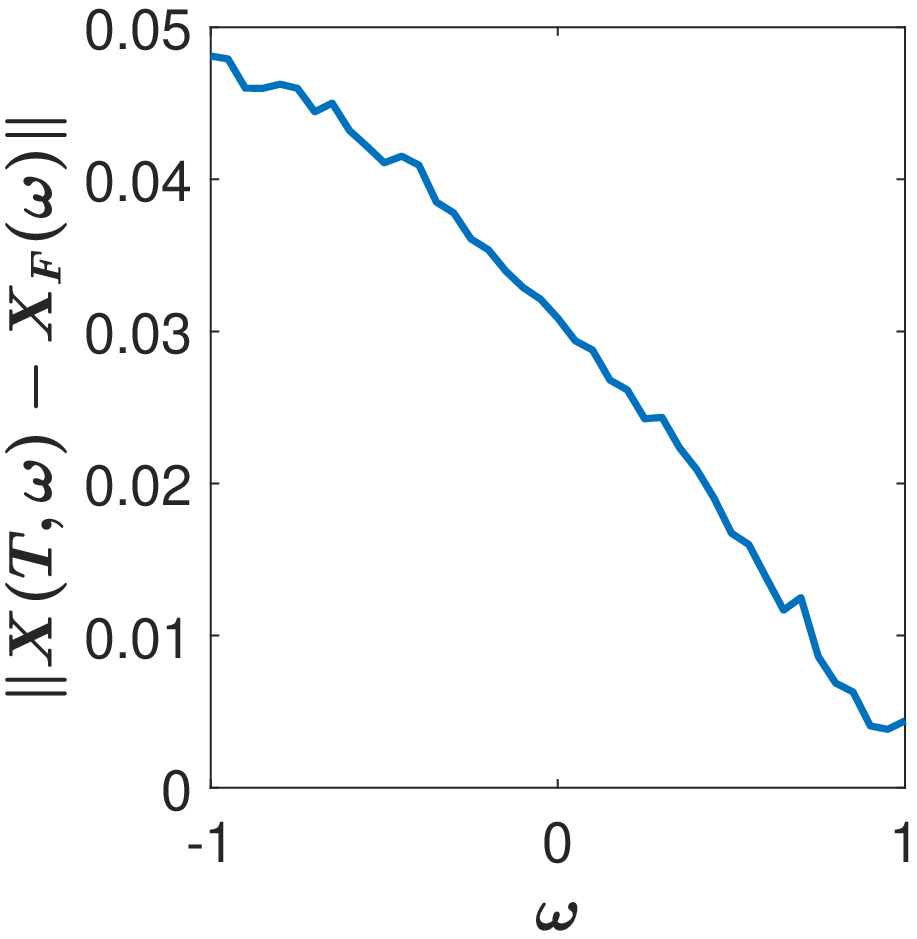}
  \end{subfigure}
  \hspace{0pt}
  \adjustbox{minipage=1em,valign=t}{\subcaption{}\label{fig: bloch_traj}}%
  \begin{subfigure}[b]{1\linewidth}
    \centering\includegraphics[width=0.5\linewidth]{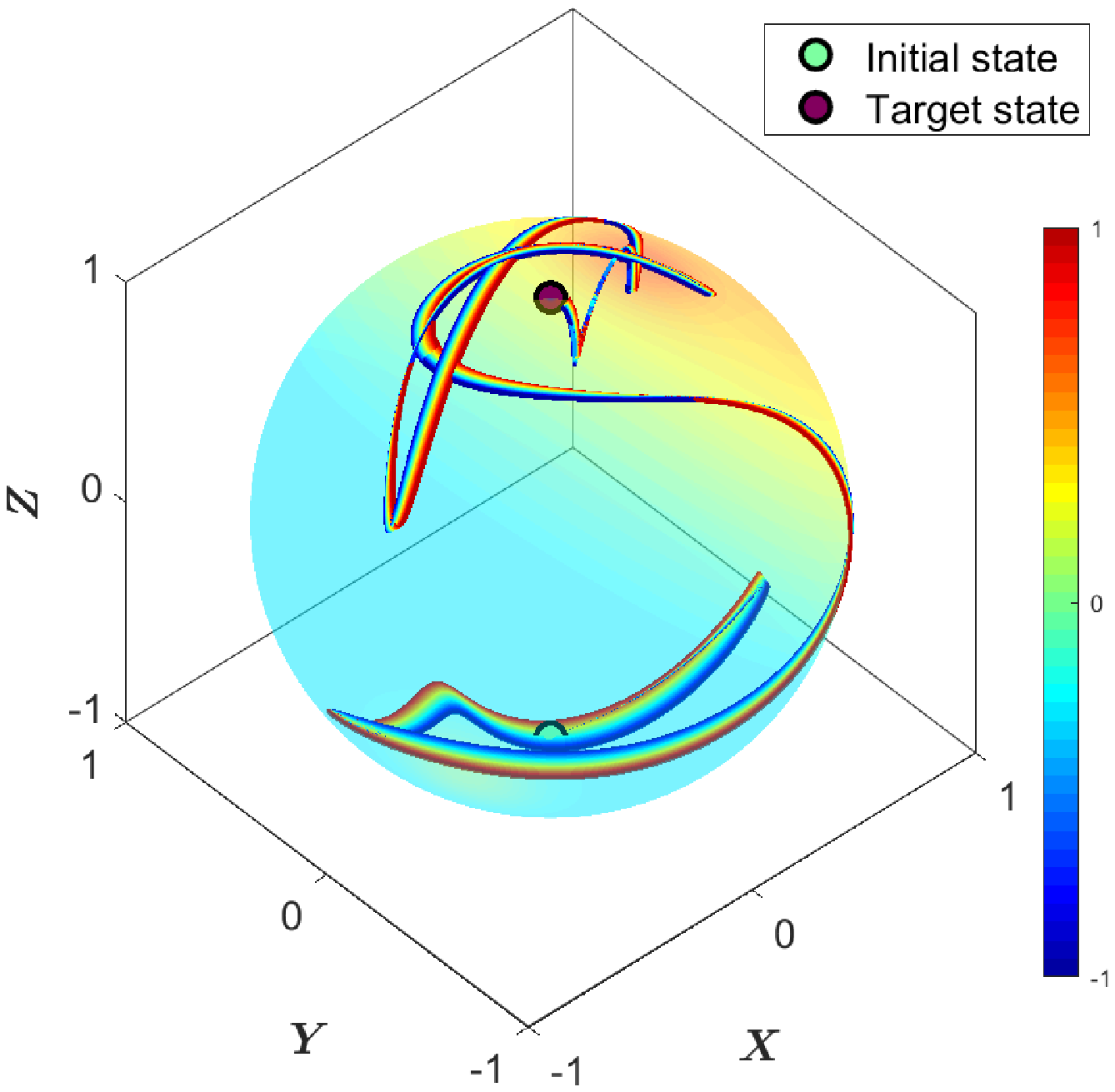}
  \end{subfigure}
  \caption{Feasible control of an ensemble of Bloch systems. (a) A feasible ensemble control law obtained after $300$ control-trajectory iterations as presented in \eqref{eq:sequence}. (b) and (c) illustrate the terminal error and the trajectory of each Bloch system by the application of the feasible ensemble control law in (a), respectively.}
\end{figure}

\section{Conclusion}
\label{sec: conclusion}

In this paper, we cast the problem of ensemble control design as a convex feasibility problem in a Hilbert space and proposed a convex-geometric approach for a systematic design of feasible and optimal controls for linear ensemble systems with or without constraints using the idea of iterative weighted projections. This approach also enabled a rigorous procedure for numerical evaluation of ensemble reachability and controllability for linear ensemble systems. This new addition expanded the numerical scope for understanding the fundamentals of ensemble control, which was mainly considered solely for control design. In addition to tackling linear ensemble systems, we proposed an iterated computational method combing the convex-geometric approach into an iterative framework for finding feasible control of bilinear ensemble systems. Numerical experiments were conducted to illustrate the applicability of the proposed convex-geometric approach to the design of ensemble control signals for linear and bilinear ensemble systems as well as to numerical verification of ensemble reachability and controllability.

\appendices

\section{Proof of the weighted projection algorithm}
\label{appendix: pf weighted projection}

Now we consider the following two sets in product space $\mathit{\Omega}$:
  \begin{align}
    \mathcal{C} &= C_1 \times \cdots \times C_N, \label{equ: pf def C}\\
    \mathcal{D} &= \{ U \in \mathit{\Omega} \:|\: u_1= \cdots =u_N \}. \label{equ: pf def D}
  \end{align}

Then $\bigcap_{i=1}^N C_i \neq \emptyset$ is equivalent to $\mathcal{C}\cap \mathcal{D}\neq \emptyset$.

Since $C_1, \ldots, C_N$ is closed and convex, $\mathcal{C}$ is also closed and convex. $\mathcal{D}$ is a subspace of $\mathit{\Omega}$, which is automatically closed and convex. Therefore the projections onto $\mathcal{C}$ and $\mathcal{D}$, denoted as $P_{\mathcal{C}}$ and $P_{\mathcal{D}}$, are well-defined. We associate each $u^{(k)}\in \mathcal{U}$ with $\tilde{U}^{(k)} := (u^{(k)}, \ldots , u^{(k)}) \in \mathit{\Omega}$. Then we prove following the update rule in \eqref{equ: weighted projection} to compute $\{u^{(k)}\}$, the associated sequence of $\{U^{(k)}\}$ satisfies
\begin{align*}
  \tilde{U}^{(k+1)} = P_{\mathcal{D}}P_{\mathcal{C}} \tilde{U}^{(k)}.
\end{align*}

We first show that $P_{\mathcal{C}} \tilde{U}^{(k)} = (P_{C_1}u^{(k)}, \ldots , P_{C_N} u^{(k)})$. By the definition of projection, we have $$P_{\mathcal{C}}\tilde{U}^{(k)} = \underset{\tilde{V}\in \mathcal{C}}{\text{argmin}} \| \tilde{U}^{(k)} - \tilde{V}\|_{\mathit{\Omega}} = \underset{\tilde{V}\in \mathcal{C}}{\text{argmin}} \| \tilde{U}^{(k)} - \tilde{V}\|^2_{\mathit{\Omega}}.$$

By the definition of inner product on $\mathit{\Omega}$, it holds that
\begin{align}
  &\|\tilde{U}^{(k)} - \tilde{V}\|_{\mathit{\Omega}}^2 = \langle \tilde{U}^{(k)} -\tilde{V}, \tilde{U}^{(k)} - \tilde{V} \rangle_{\mathit{\Omega}} \nonumber \\
  &= \sum_{i=1}^N \lambda_i \langle u^{(k)} - v_i, u^{(k)} - v_i \rangle_{\mathcal{U}} = \sum_{i=1}^N \lambda_i \|u^{(k)} - v_i\|^2_\mathcal{U}. \label{equ: pf 2}
\end{align}
Since each $v_i\in C_i \subset X$, by the definition of projection on $\mathcal{U}$, $\|u^{(k)} - v_i\|_\mathcal{U}^2 \geq \|u^{(k)} - P_{C_i}u^{(k)}\|_\mathcal{U}^2$. Hence in \eqref{equ: pf 2} can be bounded below by
\begin{equation} \label{equ: pf 3}
  \| \tilde{U}^{(k)} - \tilde{V}\|_{\mathit{\Omega}} \geq  \sum_{i=1}^N \lambda_i \| u^{(k)} - P_{C_i}u^{(k)}  \|_{\mathcal{U}}^2.
\end{equation}
We observe that $P_{C_i} u^{(k)} \in C_i$ for all $i$, the equality in \eqref{equ: pf 3} can be achieved when $\tilde{V} = (P_{C_1}u^{(k)}, \ldots , P_{C_N} u^{(k)})$. Hence we conclude that $P_{\mathcal{C}}\tilde{U}^{(k)} = (P_{C_1}u^{(k)}, \ldots , P_{C_N} u^{(k)})$.

Next, we show that $(u^{k+1}, \ldots, u^{(k+1)}) = P_{\mathcal{D}}P_{\mathcal{C}}\tilde{U}^{(k)}$.

Denote $\tilde{W} = (w, \ldots , w) = P_{\mathcal{D}}P_{\mathcal{C}}\tilde{U}^{(k)}$. We observe that $\mathcal{D}$ is a subspace in $\mathit{\Omega}$. Hence it holds that $\forall s\in \mathit{\Omega}, \forall t\in \mathcal{D}, s- P_\mathcal{D}s\perp t$. Now taking $s = P_{\mathcal{C}} \tilde{U}^{(k)}$ and $t = r\tilde{W}$, where $r$ is an arbitrary real number yields that
\begin{equation}\label{equ: pf 4}
  \langle P_\mathcal{C} \tilde{U}^{(k)} - \tilde{W}, r\tilde{W} \rangle_{\mathit{\Omega}} = 0.
\end{equation}
Substituting definition of inner product on $\mathit{\Omega}$ into in \eqref{equ: pf 4} yields that
\begin{equation}\label{equ: pf 5}
  \sum_{i=1}^N \lambda_i \langle P_{C_i} u^{(k)} - w, rw \rangle_\mathcal{U} = \langle \sum_{i=1}^N \lambda_i P_{C_i}u^{(k)} - w, rw \rangle_\mathcal{U}.
\end{equation}
Since in \eqref{equ: pf 5} holds for all $r\in \mathbb{R}$, $\sum_{i=1}^N \lambda_i P_{C_i} u^{(k)} - w$ must be $0$, which implies that $w = \sum_{i=1}^N \lambda_i P_{C_i} u^{(k)}$. By the update rule in \eqref{equ: weighted projection}, it holds that $u^{(k+1)} = \sum_{i=1}^N \lambda_i P_{C_i} u^{(k)}$. Hence we have $w = u^{(k+1)}$, which concludes that $\tilde{U}^{(k+1)} = W = P_\mathcal{D}P_\mathcal{C} \tilde{U}^{(k)}$.


\bibliographystyle{ieeetr}
\footnotesize
\bibliography{Ensemble_Projection}

\end{document}